\documentclass[12pt
]{scrartcl}
%
%
\usepackage{amsthm,amsmath,amssymb}
\usepackage{tikz}
\usepackage{tikz-cd}
\newcommand{\isom}{\cong}
\newcommand{\set}[1]{\{#1\}}
\newcommand{\defset}[2]{\{#1\,\mid\,#2\}}
\newcommand{\gen}[1]{\mathopen{<}#1\mathclose{>}}
\newcommand{\abs}[1]{\lvert#1\rvert}

\DeclareMathOperator{\Gal}{Gal}
\newcommand{\CC}{{\mathbb{C}}}
\newcommand{\FF}{\mathbb F}            
\newcommand{\QQ}{\mathbb Q}
\newcommand{\RR}{{\mathbb R}}
\newcommand{\ZZ}{\mathbb Z}
\theoremstyle{plain}
\newtheorem{Theorem}{Theorem}[section]
\newtheorem{Lemma}[Theorem]{Lemma}

\newtheorem{Proposition}[Theorem]{Proposition}

\newtheorem{Question}[Theorem]{Question}

\theoremstyle{definition}
\newtheorem{Definition}[Theorem]{Definition}
\newtheorem{Remark}[Theorem]{Remark}
\newtheorem{Example}[Theorem]{Example}
\begin{document}
\title{Decompositions of rational functions over real and complex
  numbers and a question about invariant curves} \author{Peter
  M\"uller}
\maketitle
\begin{abstract}
  We consider the connection of functional decompositions of rational
  functions over the real and complex numbers, and a question about
  curves on a Riemann sphere which are invariant under a rational
  function.
\end{abstract}
\section{Introduction}
Let $\hat\CC=\CC\cup\set{\infty}$ be the Riemann sphere, and
$\hat\RR=\RR\cup\set{\infty}$. A \emph{circle} in $\hat\CC$ is either
a usual circle in $\CC$, or a line in $\hat\CC$. So the circles in
$\CC$ are just the curves $\Gamma=\lambda(\hat\RR)$, where
$\lambda(z)=\frac{az+b}{cz+d}\in\CC(z)$ is a linear fractional
function (with $ad-bc\ne0$).

Long ago Fatou suggested to study (Jordan) curves
$\Gamma\subset\hat\CC$ which are invariant under a rational function
of degree $\ge2$. See \cite{eremenko:invariant_curves} for recent
progress on this. The case that $\Gamma$ lies in a circle
$\lambda(\hat\RR)$ is not interesting, because any rational function
$r=\lambda^{-1}\circ s\circ\lambda$ with $s\in\RR(z)$ leaves $\Gamma$
invariant, and there are no other rational functions with this
property.

Motivated by his results on invariant curves in
\cite{eremenko:invariant_curves}, Alexandre Eremenko suggested to
investigate the following source of invariant curves, and raised two
questions about this family:
\begin{Question}
Let $f,g\in\CC(z)$ be non-constant rational functions, such that
$f(g(z))\in\RR(z)$, so the curve $\Gamma=g(\hat R)$ is invariant
under $r=g\circ f$. Assume that $\Gamma$ is not contained in a circle.
\begin{itemize}
\item[(a)] Is it possible that $\Gamma$ is a Jordan curve?
  (\cite{eremenko:invariant_curves}, \cite{eremenko:mo})
\item[(b)] Is it possible that $r:\Gamma\to\Gamma$ is injective?
  (\cite{eremenko:personal}, and special case of
  \cite{eremenko:invariant_analytic})
\end{itemize}
\end{Question}
Note that $\Gamma=g(\hat R)$ is contained in a circle if and only if
there is a linear fractional function $\lambda\in\CC(z)$ such that
$\tilde g=\lambda\circ g\in\RR(z)$. In this case $f\circ g=\tilde
f\circ\tilde g$ with $\tilde f=f\circ\lambda^{-1}\in\RR(z)$. So the
decomposition of $f\circ g$ over $\CC$ essentially arises from a
decomposition over $\RR$.

There are rational functions $f\circ g\in\RR(z)$ whose decompositions
do not come from a decomposition over the reals. On the other hand, it
is known that decompositions of real polynomials over the complex
numbers always arise from real decompositions. See Section \ref{S:RC}
for more about this.

The purpose of this paper is to give a positive answer to question
(a), and a negative answer to a slight weakening of (b). More precisely,
regarding (a), we show:
\begin{Theorem}\label{T:ell}
  For every odd prime $\ell$ there are rational functions
  $f,g\in\CC(z)$ both of degree $\ell$, such that
  \begin{itemize}
  \item[(a)] $f(g(z))\in\RR(z)$.
  \item[(b)] $g:\hat\RR\to\hat\CC$ is injective, so $g(\hat\RR)$ is a
    Jordan curve.
\item[(c)] $g(\hat\RR)$ is not a circle.
  \end{itemize}
\end{Theorem}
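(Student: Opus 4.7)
My plan is to construct $f$ and $g$ from an isogeny of elliptic curves, with $f\circ g$ being the Latt\`es map associated to multiplication by $\ell$ on a curve defined over $\RR$. The motivation comes from the following observation. Suppose $f\circ g\in\RR(z)$ and that $(f,g)$ is, up to M\"obius transformations, the \emph{only} decomposition of $f\circ g$ of type $(\ell,\ell)$. Applying complex conjugation to $f\circ g=\bar f\circ\bar g$ shows $(\bar f,\bar g)$ is M\"obius-equivalent to $(f,g)$, so $\bar g=\mu\circ g$ for a M\"obius $\mu$; a short computation gives $\bar\mu\circ\mu=\mathrm{id}$, and that $g(\hat\RR)$ lies in the fixed set of the antiholomorphic involution $w\mapsto\mu^{-1}(\bar w)$, which is a generalised circle. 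So to meet (c) one must exploit a function $f\circ g$ with \emph{several} inequivalent $(\ell,\ell)$-decompositions over $\CC$ which conjugation permutes nontrivially. Latt\`es maps provide exactly this: the $\ell+1$ cyclic subgroups of order $\ell$ in $E[\ell]$ give rise to that many decompositions, and $\mathrm{Gal}(\CC/\RR)$ acts on them as it acts on the subgroups.

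Concretely, I would choose an elliptic curve $E/\RR$ on which complex conjugation $\tau$ acts on $E[\ell]\cong\FF_\ell^{\,2}$ with two distinct eigenvalues, so that exactly two of the $\ell+1$ order-$\ell$ subgroups are $\tau$-stable; any $E$ with $E(\RR)$ disconnected works. Pick a cyclic subgroup $C\subset E[\ell]$ of order $\ell$ which is \emph{not} $\tau$-stable. Factoring $[\ell]=\pi'\circ\pi$ with $\pi\colon E\to E/C$ and $\pi'\colon E/C\to E$, and pushing both maps down through the $x$-coordinate quotients by $[-1]$ (which commutes with the isogenies since $\ell$ is odd, so $C\cap E[2]=0$), produces $g,f\in\CC(z)$ of degree $\ell$ with $f\circ g\in\RR(z)$; that gives (a). The non-$\tau$-stability of $C$, via the standard correspondence between order-$\ell$ subgroups of $E[\ell]$ and $(\ell,\ell)$-decompositions of the Latt\`es map modulo M\"obius changes of the middle variable, yields that no M\"obius $\lambda$ makes $\lambda\circ g$ real, which is (c).

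The main obstacle is (b), the injectivity of $g|\hat\RR$. Writing $x,x'$ for the $x$-coordinates on $E$ and $E/C$, the set $x^{-1}(\hat\RR)\subset E(\CC)$ equals $\{P\in E(\CC)\,:\,\tau P=\pm P\}$, a concrete union of real $1$-cycles on the torus $E(\CC)$ determined by the period lattice. Since $g(x(P))=x'(\pi(P))$, failure of injectivity of $g|\hat\RR$ translates into the existence of distinct $\pm$-orbits $\{P_1,-P_1\}\ne\{P_2,-P_2\}$ in $x^{-1}(\hat\RR)$ with $P_2\equiv\pm P_1\pmod{C}$, i.e.\ a relation $P_2=\pm P_1+c$ with $0\ne c\in C$. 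The heart of the argument is therefore to verify, for a carefully chosen $E$, that the real cycle $x^{-1}(\hat\RR)$ and its translate by any nonzero element of the non-$\tau$-invariant subgroup $C$ admit no such coincidences. This topological/arithmetic verification on the torus $E(\CC)$, rather than the algebraic construction of $f$ and $g$, is where the real work lies.
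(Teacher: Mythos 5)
Your overall strategy --- factor multiplication by $\ell$ through an isogeny $\Phi\colon E\to E'=E/C$ with $C$ not stable under complex conjugation, and descend to quotients by an involution --- is indeed the paper's strategy, and your treatment of (a) and (c) is on the right track. But the construction you actually propose, descending through the $x$-coordinate maps (i.e.\ quotienting by $p\mapsto -p$), fails property (b) for \emph{every} choice of $E$ and of a non-conjugation-stable $C$; the verification you defer ("where the real work lies") is not hard but impossible in your setup. Concretely, take $E(\RR)$ disconnected as you suggest, so $E(\CC)=\CC/\Lambda$ with $\Lambda=\ZZ\oplus\ZZ it$ and conjugation acting as $u\mapsto\bar u$. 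A generator of a non-stable $C$ has the form $c=(a+ibt)/\ell$ with $a,b\not\equiv 0\pmod\ell$. Put $P_1=-a/\ell\in E(\RR)$ and $P_2=P_1+c=ibt/\ell$. Then $\bar P_2=-P_2$, so both $x(P_1)$ and $x(P_2)$ lie in $\hat\RR$; they are distinct points of $\hat\RR$ since $P_2\ne\pm P_1$; and $g(x(P_1))=x'(\Phi(P_1))=x'(\Phi(P_2))=g(x(P_2))$ because $P_2-P_1=c\in\ker\Phi$. In your own terms, the coincidence $P_2=P_1+c$ always occurs because $c+\bar c$ is a nonzero point of $E(\RR)[\ell]$, and every such point is twice a point of $E(\RR)$ (it lies on the identity component $\RR/\ZZ$).

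The missing idea is to quotient not by $p\mapsto -p$ but by the twisted involution $p\mapsto w-p$ (and by $p'\mapsto\Phi(w)-p'$ on $E'$) for a point $w\in E(\RR)$ that is \emph{not} of the form $2\hat w$ with $\hat w\in E(\RR)$; such $w$ exists exactly when $E(\RR)$ is disconnected. With this twist, the bookkeeping you set up leads from a failure of injectivity to a relation $w=2\bigl(p+\frac{\ell-1}{2}r\bigr)$ with $p+\frac{\ell-1}{2}r\in E(\RR)$, contradicting the choice of $w$; with $w=0_E$ there is no contradiction, as the counterexample above shows. The resulting $f\circ g$ is then a real twist of the Latt\`es map rather than the Latt\`es map itself. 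The rest of your outline (the role of $C\cap\bar C=\set{0_E}$ in ruling out a circle, the descent of $\Phi$ and its dual to rational functions $f,g$ of degree $\ell$, the reality of $f\circ g$) survives essentially unchanged once the involutions are twisted, since $w$, $\Phi(w)$ and $\ell w$ make all three quotient maps compatible with the isogenies.
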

In order to formulate the next two results, we define a weakening of
injectivity of rational functions on $\RR$.
\begin{Definition}
  A rational function $g\in\RR(z)$ is said to be \emph{weakly
    injective on $\RR$}, if there exists $z_0\in\RR$ which is not a
  critical point of $g$, and besides $z_0$ there is no $y_0\in\hat\RR$
  with $g(z_0)=g(y_0)$.
\end{Definition}
A partial answer to question (b) is
\begin{Theorem}\label{T:b}
Let $f,g\in\CC(z)$ be non-constant rational functions, such that
$f\circ g\in\RR(z)$. Assume that $g$ is weakly injective, and that the
curve $\Gamma=g(\hat\RR)$ is not contained in a circle. Then the map
$g\circ f:\Gamma\to\Gamma$ is not injective.
\end{Theorem}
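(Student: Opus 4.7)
The plan is to argue by contradiction: assume $r = g \circ f$ is injective on $\Gamma$ and deduce that $\Gamma$ must be contained in a circle. The first step is to translate this injectivity into a clean statement about real fibers. From $r \circ g = g \circ h$ with $h = f \circ g \in \RR(z)$, the map $r$ acts on $\Gamma$ by $g(y) \mapsto g(h(y))$, so $r|_\Gamma$ is injective exactly when, for all $y_1, y_2 \in \hat\RR$, the equality $g(y_1) = g(y_2)$ is equivalent to $h(y_1) = h(y_2)$; equivalently, the real fibers of $g$ and of $h$ on $\hat\RR$ coincide. As immediate consequences, $f|_\Gamma$ is a bijection onto $h(\hat\RR) \subseteq \hat\RR$ and $g$ is injective on $h(\hat\RR)$.

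Next I plan to exploit the real/complex decomposition theory discussed in Section~\ref{S:RC}. Real L\"uroth applied to $\CC(g) \cap \RR(z)$ produces $g_0 \in \RR(z)$ with $\RR(g_0) = \CC(g) \cap \RR(z)$; correspondingly one has factorizations $g_0 = q \circ g$ with $q \in \CC(z)$ of degree $e := [\CC(g) : \CC(g_0)]$, and $h = p \circ g_0$ with $p \in \RR(z)$ of degree $m/e$. The hypothesis that $\Gamma$ is not contained in a circle is exactly $e \ge 2$, so the task is to derive $e = 1$. The tower $\CC(h) \subseteq \CC(g_0) \subseteq \CC(g)$ gives a chain of refinements of real-fiber partitions on $\hat\RR$: fibers of $g$ refine those of $g_0$, which refine those of $h$. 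The first step collapses the outer two into equality, forcing all three to coincide; reading the equality of $g$- and $g_0$-fibers through $g_0 = q \circ g$ then shows that $q|_\Gamma$ is injective, with image $q(\Gamma) = g_0(\hat\RR) \subseteq \hat\RR$.

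To close the argument I would induct on $\deg f$. The pair $(q, g)$ satisfies all hypotheses of the theorem, with $\deg q = e$; in the generic case $e < m$ the inductive hypothesis applied to $(q, g)$ gives that $(g \circ q)|_\Gamma$ is not injective, yielding distinct $w_1, w_2 \in \Gamma$ with $g(q(w_1)) = g(q(w_2))$, and the injectivity of $q|_\Gamma$ then produces distinct $u_1, u_2 \in q(\Gamma) \subseteq \hat\RR$ lying in a common real $g$-fiber. Combined with the equalities of the first step and the anchor provided by the weak-injectivity point $z_0$, this should yield a contradiction. The main obstacle I expect is the borderline case $e = m$, in which $\CC(g_0) = \CC(h)$ and the induction does not strictly decrease $\deg f$; here the contradiction has to be extracted more directly, by comparing $(f, g)$ with its complex conjugate $(\bar f, \bar g)$ --- which is a genuinely inequivalent decomposition of $h$ precisely because $\Gamma$ is not contained in a circle --- and using the weak injectivity of $g$ at $z_0$ to preclude simultaneous injectivity of $q|_\Gamma$ and $(g \circ q)|_\Gamma$.
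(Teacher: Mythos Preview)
Your plan diverges substantially from the paper's argument, and as written it does not close.

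\textbf{The paper's proof.} The paper derives Theorem~\ref{T:b} in one line from Theorem~\ref{T:RC}: if $g$ is weakly injective at $z_0$ and $g\circ f$ is injective on $\Gamma$, then $g\circ f\circ g$ is weakly injective at $z_0$, hence so is $f\circ g=h$; Theorem~\ref{T:RC} then forces $\lambda\circ g\in\RR(z)$ for some linear fractional $\lambda$, contradicting the assumption on~$\Gamma$. All the work sits in Theorem~\ref{T:RC}, whose proof is Galois-theoretic and rests on the group-theoretic Proposition~\ref{P:Gs}.

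\textbf{Where your argument breaks.} Your inductive step does not yield a contradiction. From the inductive hypothesis applied to $(q,g)$ you obtain that $g\circ q$ is \emph{not} injective on $\Gamma$; combined with the injectivity of $q|_\Gamma$ this gives distinct $u_1,u_2\in q(\Gamma)=g_0(\hat\RR)\subseteq\hat\RR$ with $g(u_1)=g(u_2)$. But nothing you have established rules this out. The injectivity of $g$ you derived holds only on $h(\hat\RR)$, and there is no reason for $g_0(\hat\RR)\subseteq h(\hat\RR)$ (indeed $h=p\circ g_0$ with $p\in\RR(z)$, so $h(\hat\RR)=p(g_0(\hat\RR))$, which is typically a proper subset of~$\hat\RR$ and need not contain $g_0(\hat\RR)$). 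The weak-injectivity point $z_0$ does not help either: it controls a single real fiber of~$g$, not the fibers over $g_0(\hat\RR)$. So the phrase ``this should yield a contradiction'' is precisely where the argument fails.

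\textbf{The borderline case.} You correctly flag $e=m$ as problematic, but your proposed treatment (compare $(f,g)$ with $(\bar f,\bar g)$ and invoke weak injectivity at $z_0$) is only a gesture. In this case $\RR(g_0)=\RR(h)$, so up to a real M\"obius change $q=f$, and you are back to the original statement with no reduction. Any direct argument here would have to do essentially the same work as Theorem~\ref{T:RC}. In short, the induction never actually reduces the difficulty; the hard content---which in the paper is Proposition~\ref{P:Gs}---is missing from your outline.
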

A slight variant of this theorem shows that for a fairly large class
of rational functions from $\RR(z)$, each decomposition over $\CC$
arises from a decomposition over $\RR$.
\begin{Theorem}\label{T:RC}
  Let $f,g\in\CC(z)$ be non-constant rational functions such that
  $h(z)=f(g(z))\in\RR(z)$ is weakly injective. Then there is a linear
  fractional function $\lambda\in\CC(z)$ such that $\lambda\circ
  g\in\RR(z)$.
\end{Theorem}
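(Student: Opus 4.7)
Let $\sigma$ denote the automorphism of $\CC(z)$ over $\RR(z)$ that conjugates coefficients, and write $\bar f := \sigma(f)$, $\bar g := \sigma(g)$. Since $h \in \RR(z)$, applying $\sigma$ to $h = f \circ g$ gives a second decomposition $h = \bar f \circ \bar g$, so $\CC(g)$ and $\CC(\bar g) = \sigma(\CC(g))$ are intermediate fields of $\CC(z)/\CC(h)$ of the same degree $\deg f$ over $\CC(h)$. The plan is first to show that these two fields coincide, and then to use the witness point $z_0$ of weak injectivity to descend the resulting linear fractional relation to an equation over $\RR$.

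\textbf{Main step: $\CC(g) = \CC(\bar g)$.} Perturbing $z_0$ inside the open locus on which weak injectivity holds, I may arrange that $t_0 := h(z_0)$ is a regular value of $h$, so that $\Omega := h^{-1}(t_0)$ consists of $n := \deg h$ distinct points, of which $z_0$ is the only real one. In particular $n$ is odd, and $\sigma$ acts on $\Omega$ as an involution with exactly one fixed point. The decompositions of $h$ induce block systems $\mathcal{B}_g$ and $\mathcal{B}_{\bar g} = \sigma(\mathcal{B}_g)$ of the monodromy group $G$ on $\Omega$, each with $\deg f$ blocks of size $\deg g$. Let $B_1 \in \mathcal{B}_g$ and $B_2 \in \mathcal{B}_{\bar g}$ be the blocks containing $z_0$; then $\sigma(B_1) = B_2$, so $B_1 \cap B_2$ is $\sigma$-stable, and since $z_0$ is the unique $\sigma$-fixed point of $\Omega$, $\sigma$ restricts to an involution on $B_1 \cap B_2$ with exactly one fixed point, so $s := |B_1 \cap B_2|$ is odd. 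By $G$-transitivity all blocks of the common refinement $\mathcal{B}_g \wedge \mathcal{B}_{\bar g}$ have this cardinality $s$ (with $s \mid \deg g$). The main obstacle is ruling out $s < \deg g$; I expect to handle this by a more delicate analysis of the $\sigma$-action on the set of refinement blocks together with the group-theoretic constraints imposed by $G$-transitivity. Once $s = \deg g$, we get $B_1 = B_2$, and then $G$-transitivity propagates this equality to the whole block systems, giving $\mathcal{B}_g = \mathcal{B}_{\bar g}$ and hence $\CC(g) = \CC(\bar g)$.

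\textbf{Descent step.} Having $\CC(g) = \CC(\bar g)$ yields a unique $\mu \in \mathrm{PGL}_2(\CC)$ with $\bar g = \mu \circ g$; applying $\sigma$ once more gives $\bar\mu \circ \mu = \mathrm{id}$, so $\mu$ defines a Galois descent datum on $\mathbb{P}^1_\CC$. Evaluating $\bar g = \mu \circ g$ at $z_0 \in \hat\RR$ gives $\mu(w_0) = \overline{w_0}$ where $w_0 := g(z_0)$. The locus $\Gamma_\mu := \{ w \in \hat\CC : \mu(w) = \bar w \}$ is by construction the set of real points of the twisted real form of $\mathbb{P}^1$ associated with $\mu$; from $\bar\mu \mu = \mathrm{id}$ together with the non-emptiness witnessed by $w_0$, this twist is the trivial form and $\Gamma_\mu$ is a circle in $\hat\CC$. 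For every $z \in \hat\RR$ the same computation shows $g(z) \in \Gamma_\mu$, so $\Gamma = g(\hat\RR) \subseteq \Gamma_\mu$. Choosing any linear fractional $\lambda$ mapping the circle $\Gamma_\mu$ to $\hat\RR$, the composition $\lambda \circ g$ maps $\hat\RR$ into $\hat\RR$, and hence $\lambda \circ g \in \RR(z)$ by analytic continuation; this is the desired $\lambda$.
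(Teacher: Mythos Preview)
Your overall architecture matches the paper's: reduce to showing $\CC(g)=\CC(\bar g)$ via Galois/monodromy theory, then descend. The descent step is fine and is essentially the paper's Lemma~\ref{L:cocycle}, rephrased in terms of the anti-holomorphic involution $w\mapsto\overline{\mu(w)}$ and its fixed circle. The perturbation of $z_0$ to make $t_0$ a regular value is also legitimate, since weak injectivity is an open condition on $z_0$ and $h$ has only finitely many critical values.

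The genuine gap is the sentence ``The main obstacle is ruling out $s<\deg g$; I expect to handle this by a more delicate analysis\ldots''. Translated into the subgroup picture (blocks through $z_0$ correspond to subgroups between $G_{z_0}$ and $G$), the blocks $B_1$ and $B_2$ correspond to $M$ and $M^\sigma$, and your desired equality $B_1=B_2$ is exactly $M=M^\sigma$. That statement is the paper's Proposition~\ref{P:Gs}, and it is \emph{not} a routine consequence of $G$-transitivity plus the parity observation that $s$ is odd. The paper devotes all of Section~\ref{S:Gs} to it: one passes to a minimal counterexample, shows there that $\gen{M,M^\sigma}=G$ and $M\cap M^\sigma=G_\omega$, then proves that $G$ has even order, locates an involution in $M\setminus G_\omega$, and finishes with a dihedral argument using that subgroups of a cyclic group are determined by their order. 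None of these steps is suggested by ``the $\sigma$-action on the set of refinement blocks'', and indeed your observation that $\sigma$ fixes exactly one refinement block (which follows from $s$ odd) only recovers the starting hypothesis one level down and does not by itself force $s=\deg g$.

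So as written, the proposal is not a proof: the heart of the argument is precisely the step you have flagged but not carried out.
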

The main ingredient (besides Galois theory) in the proof of the
previous two theorems is the following group-theoretic result. (See
Section \ref{S:Gs} for the notation.) 
\begin{Proposition}\label{P:Gs}
  Let $G$ be a transitive group of permutations of the finite set
  $\Omega$. Let $\sigma$ be a permutation of $\Omega$ of order $2$
  which fixes exactly one element $\omega$, and which normalizes $G$,
  that is $G^\sigma=G$. Let $G_\omega$ be the stabilizer of $\omega$
  in $G$. Then $M^\sigma=M$ for each group $M$ with $G_\omega\le M\le
  G$.
\end{Proposition}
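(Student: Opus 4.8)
The plan is to prove this by induction on $\lvert\Omega\rvert$, after reformulating it in terms of block systems. If $\sigma\in G$ then $\sigma\in G_\omega\le M$ and $M^\sigma=M$ because $M$ is a subgroup containing $\sigma$; so assume $\sigma\notin G$, and set $H:=\langle G,\sigma\rangle=G\rtimes\langle\sigma\rangle$. Since $\sigma$ fixes $\omega$ we have $G_\omega^\sigma=G_\omega$, and since $\sigma$ is an involution with a single fixed point, $\lvert\Omega\rvert$ is odd. Subgroups $M$ with $G_\omega\le M\le G$ correspond to $G$-block systems via $M\mapsto\mathcal B_M:=\{\omega^{Mg}:g\in G\}$, the block through $\omega$ being $\Delta:=\omega^M$ and $M$ its setwise stabiliser; because $\sigma$ normalises $G$ and fixes $\omega$, the $\sigma$-image $\mathcal B_M^\sigma:=\{\sigma(B):B\in\mathcal B_M\}$ is again a $G$-block system, with block $\sigma(\Delta)$ through $\omega$, corresponding to $M^\sigma$. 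As $\omega\in\Delta\cap\sigma(\Delta)$ and a block of a transitive group is recovered from its setwise stabiliser together with any of its points, $M^\sigma=M$ is equivalent to $\sigma(\Delta)=\Delta$. So it suffices to show that every block through $\omega$ is $\sigma$-invariant; we may of course assume $\{\omega\}\subsetneq\Delta\subsetneq\Omega$.

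Form the common refinement $\mathcal C:=\mathcal B_M\wedge\mathcal B_M^\sigma$ and the common coarsening $\mathcal D:=\mathcal B_M\vee\mathcal B_M^\sigma$; both are $G$-block systems, and both are stable as partitions under $\sigma$, since $\sigma$ interchanges $\mathcal B_M$ and $\mathcal B_M^\sigma$. The block of $\mathcal C$ through $\omega$ is $\Delta\cap\sigma(\Delta)$, and it is $\sigma$-invariant. If $\mathcal C$ is not the partition into singletons, pass to the quotient action of $H$ on $\mathcal C$: the induced $\bar\sigma$ still has order $2$ (otherwise $\sigma$ would act trivially on $\mathcal C$, but then the blocks of $\mathcal C$ other than $\Delta\cap\sigma(\Delta)$ would be $\sigma$-invariant of even size while dividing the odd number $\lvert\Omega\rvert$), and $\bar\sigma$ still has a unique fixed point by the same parity argument, so $(\bar G,\mathcal C,\bar\sigma)$ satisfies the hypotheses with $\lvert\mathcal C\rvert<\lvert\Omega\rvert$; applying the induction hypothesis to the block system induced by $\mathcal B_M$ on $\mathcal C$ gives $\sigma(\Delta)=\Delta$. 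Dually, if $\mathcal D\ne\{\Omega\}$, its block $\Delta_1$ through $\omega$ is a proper $\sigma$-invariant block; restricting the action to $\Delta_1$ (with $G$ replaced by the setwise stabiliser of $\Delta_1$ and $\sigma$ by $\sigma\vert_{\Delta_1}$, which is again an involution with unique fixed point $\omega$) and applying the induction hypothesis to $\Delta\subseteq\Delta_1$ again yields $\sigma(\Delta)=\Delta$.

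There remains the case where $\mathcal C$ consists of singletons (equivalently $M\cap M^\sigma=G_\omega$) and $\mathcal D=\{\Omega\}$ (equivalently $\langle M,M^\sigma\rangle=G$). Then any block of $\mathcal B_M$ meets any block of $\mathcal B_M^\sigma$ in at most one point; since $\sigma$ carries $\mathcal B_M$ bijectively onto $\mathcal B_M^\sigma$, both have the same number $m$ of blocks, all of size $d:=\lvert\Delta\rvert$, with $\lvert\Omega\rvert=md$ and $d\le m$. If $x$ is in its $\mathcal B_M$-block $B$, then $x\in B\cap\sigma(B)$ iff $\sigma(x)\in B$, and because $\lvert B\cap\sigma(B)\rvert\le1$ this forces $\sigma(x)=x$; hence $\sigma$ has exactly one fixed point for every block $B\in\mathcal B_M$ with $B\cap\sigma(B)\ne\emptyset$, and none otherwise, so by hypothesis $\Delta$ is the unique such block. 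The incidence bipartite graph on $\mathcal B_M\sqcup\mathcal B_M^\sigma$ (join two blocks meeting in a point) is connected because $\mathcal D=\{\Omega\}$, its edge set is naturally $\Omega$, the group $G$ acts on it preserving the two parts and transitively on edges, and $\sigma$ is an involutory graph automorphism interchanging the two parts (hence fixing no vertex) and fixing exactly one edge. The crux — and the step I expect to be the real obstacle — is to show that this configuration forces $m=d$. Granting this, $\Omega$ becomes a $d\times d$ grid whose rows are the blocks of $\mathcal B_M$ and whose columns are the blocks of $\mathcal B_M^\sigma$, $\sigma$ acts as $(i,j)\mapsto(\pi^{-1}(j),\pi(i))$ for a suitable permutation $\pi$ of the index set, and its fixed points are the $d$ points $(i,\pi(i))$; since $d=\lvert\Delta\rvert>1$ this contradicts uniqueness of the fixed point and finishes the induction. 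Establishing $m=d$ is precisely where the unique-fixed-point hypothesis must be combined with the edge-transitivity of $G$ and the connectedness of the incidence graph; a direct double-coset computation with $M$ and $M^\sigma$, exploiting $\langle M,M^\sigma\rangle=G$ and $M\cap M^\sigma=G_\omega$, seems the most promising route.
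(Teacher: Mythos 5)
Your reductions are sound and in fact parallel the paper's own first steps: the correspondence $M\leftrightarrow\mathcal B_M$, the equivalence $M^\sigma=M\Leftrightarrow\sigma(\Delta)=\Delta$, and the two inductive reductions via the common refinement and common coarsening are exactly the paper's lemmas $M\cap M^\sigma=G_\omega$ and $\langle M,M^\sigma\rangle=G$ in geometric language (including the parity argument that $\sigma$-invariant blocks have odd size and hence contain the unique fixed point). But the remaining case --- $M\cap M^\sigma=G_\omega$ and $\langle M,M^\sigma\rangle=G$ --- is where essentially all of the difficulty of the proposition lives, and you have not proved it: you explicitly leave the claim ``this configuration forces $m=d$'' as an acknowledged obstacle, with only a vague suggestion that a double-coset computation ``seems the most promising route.'' That is a genuine gap, not a detail. (Note also that what you would really be proving is that the configuration is impossible, since $m=d$ together with the resulting grid structure yields $d>1$ fixed points of $\sigma$; so ``$m=d$'' is not a structural fact you can hope to verify independently --- it is the entire contradiction, and nothing in your combinatorial setup so far forces it.)

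For comparison, the paper closes this case by a chain of group-theoretic arguments that your incidence-graph picture does not capture. First, $|G|$ must be even: if $|G|$ were odd, then for $g\in M\setminus G_\omega$ the element $gg^{-\sigma}$ is inverted by $\sigma$ and has a square root $h$ in $\langle gg^{-\sigma}\rangle$, and $c=hg^\sigma$ is fixed by $\sigma$, hence lies in $G_\omega$ (unique fixed point), and the identity $cg^{-\sigma}c=g$ puts $g$ into $M^\sigma$, contradicting $M\cap M^\sigma=G_\omega$. Second, a Sylow argument (using that $[G:G_\omega]$ is odd and that the set of involutions of a $\sigma$-invariant Sylow $2$-subgroup of $G_\omega$ would otherwise generate a normal subgroup fixing $\omega$) produces an involution $a\in M\setminus G_\omega$. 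Third, the key lemma ``$B\le M$ implies $G=\langle B,B^\sigma\rangle$ or $B\le G_\omega$'' applied to $B=\langle a\rangle$ shows $G=\langle a,a^\sigma\rangle$ is dihedral with a cyclic subgroup $C$ of index $2$ acting regularly, and then the modular law plus the fact that subgroups of a cyclic group are determined by their order gives $M=M^\sigma$. Some argument of comparable substance is required to finish your induction; as it stands, the proposal establishes only the easy reductions.
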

The proof of Theorem \ref{T:ell} uses elliptic curves. The
construction was motivated by a group-theoretic analysis similar to
the one which led to the proofs of Theorems \ref{T:b} and \ref{T:RC}.

\paragraph{Acknowledgment.} I thank Alexandre Eremenko for inspiring
discussions about invariant (Jordan) curves, and Theo Grundh\"ofer for
a careful reading of the somewhat intricate proof of Proposition
\ref{P:Gs}, and his suggestion which simplified its final
step. Furthermore, I thank Mike Zieve for discussions about an example
in the final section and for pointing out its connection with work by
Avanzi and Zannier.
\section{Non-circle Jordan curves invariant under a rational function}
In this section we work out our sketch from \cite{pm:mo_circles} and
prove Theorem \ref{T:ell}.

Let $E$ be an elliptic curve given by a Weierstrass equation
$Y^2=X^3+aX+b$ with $a,b\in\RR$. By $E(\CC)$ and $E(\RR)$ we denote
the complex and real points of $E$. For $p\in E(\CC)$ we let $\bar p$
be the complex conjugate of $p$. We use the structure of $E(\CC)$ as
an abelian group, with neutral element $0_E$ the unique point at
infinity.

For general facts about elliptic curves see e.g.~\cite{Silverman2}.
\begin{Lemma}
  Let $\ell\ge3$ be a prime. Then there is a point $c\in E(\CC)$ of
  order $\ell$, with $\bar c\notin\gen{c}$.
\end{Lemma}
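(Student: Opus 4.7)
My plan is to study the $\FF_\ell$-linear action of complex conjugation $\tau\colon p\mapsto\bar p$ on the $\ell$-torsion subgroup $E[\ell]\subset E(\CC)$. Since $\ell$ is an odd prime, $E[\ell]$ is a two-dimensional $\FF_\ell$-vector space with exactly $\ell+1$ one-dimensional subspaces (the subgroups of order $\ell$), and a point $c$ of order $\ell$ satisfies $\bar c\in\gen{c}$ if and only if the line $\gen{c}$ is $\tau$-stable. So it suffices to show that $\tau$ does \emph{not} stabilize every one of these $\ell+1$ lines; in fact I will show that exactly two of them are $\tau$-stable, leaving $\ell-1\ge 2$ non-stable lines to choose $c$ from.

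The main computation is to determine $\det\tau\in\FF_\ell^\times$ via the Weil pairing $e_\ell\colon E[\ell]\times E[\ell]\to\mu_\ell$. Since $\tau$ acts on $\mu_\ell\subset\CC^\times$ by $\zeta\mapsto\zeta^{-1}$, Galois-equivariance of $e_\ell$ gives $e_\ell(\tau P,\tau Q)=e_\ell(P,Q)^{-1}$. On the other hand, because $e_\ell$ is an alternating bilinear form on a two-dimensional space, any $\FF_\ell$-linear automorphism $M$ of $E[\ell]$ transforms it by its determinant: $e_\ell(MP,MQ)=e_\ell(P,Q)^{\det M}$. Applying this to $M=\tau$ and comparing with the previous identity, non-degeneracy of $e_\ell$ forces $\det\tau=-1$ in $\FF_\ell^\times$.

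With $\det\tau$ in hand, the rest is linear algebra. Since $\tau^2=1$, the minimal polynomial of $\tau$ divides $X^2-1=(X-1)(X+1)$; as $\ell$ is odd these factors are coprime in $\FF_\ell[X]$, so $\tau$ is diagonalizable with eigenvalues in $\set{\pm 1}$. Combined with $\det\tau=-1$, the eigenvalues must be $+1$ and $-1$, each with a one-dimensional eigenspace, and these two eigenlines are the only $\tau$-stable one-dimensional subspaces of $E[\ell]$. Any generator $c$ of one of the remaining $\ell-1$ lines then satisfies $\bar c=\tau(c)\notin\gen{c}$, proving the lemma.

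The only step that goes beyond elementary linear algebra is the Weil-pairing computation yielding $\det\tau=-1$; this is the expected obstacle and is the linchpin of the argument, since without it one could not exclude the possibility $\tau=-1$ on $E[\ell]$, in which case every line of $E[\ell]$ would be $\tau$-stable and the statement would fail.
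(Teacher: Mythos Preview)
Your proof is correct and takes a genuinely different route from the paper's. Both arguments begin from the same linear-algebraic reduction: the lemma fails precisely when complex conjugation $\tau$ stabilizes every line in $E[\ell]\cong\FF_\ell^2$, i.e.\ when $\tau$ acts as a scalar $\pm 1$. The paper then rules out each scalar separately: if $\tau=+1$ then $E[\ell]\subset E(\RR)$, and if $\tau=-1$ a quadratic twist of $E$ reduces to the previous case; either way one contradicts the structure $E(\RR)\cong\RR/\ZZ$ or $\RR/\ZZ\times\ZZ/2\ZZ$, which cannot contain $(\ZZ/\ell\ZZ)^2$. Your argument instead invokes Galois equivariance of the Weil pairing to compute $\det\tau=-1$ in $\FF_\ell^\times$, which in one stroke excludes both scalars (since $\det(\pm I)=1\ne -1$ for $\ell$ odd) and in fact pins down $\tau$ as conjugate to $\mathrm{diag}(1,-1)$, yielding the sharper conclusion that exactly two of the $\ell+1$ lines are $\tau$-stable. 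Your approach is more uniform and extracts more information; the paper's is more self-contained in that it avoids the Weil pairing, relying only on the real Lie-group structure of $E(\RR)$, which is in any case used again in the next lemma.
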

\begin{proof}
  Let $E[\ell]\subset E(\CC)$ be the group of $\ell$-torsion
  points. Then $E[\ell]$ is isomorphic to the vector space
  $\FF_\ell^2$, and the complex conjugation acts linearly on this
  space.

  Suppose that the claim does not hold, so the complex conjugation
  fixes each $1$-dimensional subspace of $E[\ell]$ setwise. Then the
  complex conjugation acts as a scalar map. Therefore either
  $E[\ell]\subset E(\RR)$, or $\bar c=-c$ for each $c\in E[\ell]$. In
  the latter case write $c=(u,v)$. So $u$ is real and $v$ is purely
  imaginary. Thus, upon replacing $E$ with the twisted curve
  $-Y^2=X^3+aX+b$ (which is isomorphic over $\RR$ to $Y^2=X^3+aX-b$),
  we obtain in either case an elliptic curve $E$ with
  $E[\ell]\subseteq E(\RR)$. On the other hand, $E(\RR)$ is isomorphic
  to $\RR/\ZZ$ or to $\RR/\ZZ\times \ZZ/2\ZZ$ (see
  e.g.~\cite[V.~Cor.~2.3.1]{Silverman2}). However, $\RR/\ZZ\times
  \ZZ/2\ZZ$ does not have a subgroup isomorphic to
  $\ZZ/\ell\ZZ\times\ZZ/\ell\ZZ$. This proves the claim.
\end{proof}
\begin{Lemma}\label{L:w}
  Suppose that $X^3+aX+b$ has three distinct real roots. Then there
  are elements $w\in E(\RR)$ such that there is no $\hat w\in E(\RR)$
  with $w=2\hat w$.
\end{Lemma}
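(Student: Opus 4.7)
The plan is to exploit the classification of $E(\RR)$ cited in the previous lemma, reducing the statement to a simple observation about the doubling map on a cyclic group times $\ZZ/2\ZZ$. First I would argue that the hypothesis of three distinct real roots $e_1, e_2, e_3$ of $X^3 + aX + b$ forces the case $E(\RR) \isom \RR/\ZZ \times \ZZ/2\ZZ$: the three points $(e_i, 0)$ are non-trivial real $2$-torsion points, so together with $0_E$ they form a subgroup of $E(\RR)$ isomorphic to $(\ZZ/2\ZZ)^2$, and since $\RR/\ZZ$ contains only a single element of order $2$, only the second case of the classification is possible.

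Next I would analyze the doubling map through this isomorphism. Under the identification $E(\RR) \isom \RR/\ZZ \times \ZZ/2\ZZ$, the map $[2]$ sends $(x, \epsilon)$ to $(2x, 0)$, so its image is the proper subgroup $\RR/\ZZ \times \set{0}$ of index $2$. Hence any $w \in E(\RR)$ whose image in the $\ZZ/2\ZZ$-factor is non-trivial is not of the form $2\hat w$ with $\hat w \in E(\RR)$. Concretely, such $w$ exist: the two $2$-torsion points $(e_i, 0)$ lying on the bounded oval of the real locus supply explicit witnesses (the remaining $(e_i, 0)$ and $0_E$ sit on the unbounded identity component).

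There is no serious obstacle here; the argument is essentially a bookkeeping consequence of the classification together with the tautology that doubling kills $\ZZ/2\ZZ$. The only step requiring any care is verifying that three real roots of the cubic rule out the connected case $E(\RR) \isom \RR/\ZZ$, which is immediate from the absence of a $(\ZZ/2\ZZ)^2$-subgroup in $\RR/\ZZ$.
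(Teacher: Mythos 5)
Your proof is correct and follows essentially the same route as the paper's: identify $E(\RR)\isom\RR/\ZZ\times\ZZ/2\ZZ$ and observe that doubling lands in the index-two subgroup $\RR/\ZZ\times\set{0}$, so any $w$ with nontrivial second coordinate is not twice a real point. Your extra justification that three distinct real roots force the disconnected case (via the $(\ZZ/2\ZZ)^2$ of real two-torsion, which cannot embed in $\RR/\ZZ$) correctly fills in a step the paper takes for granted.
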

\begin{proof}
  If $X^3+aX+b$ has three distinct real roots, then
  $E(\RR)\isom\RR/\ZZ\times \ZZ/2\ZZ$, so any $w$ corresponding to
  $(s,1)$, $s\in\RR/\ZZ$ arbitrary, has the property that there is no
  $\hat w\in E(\RR)$ with $w=2\hat w$. (In this case, $E(\RR)$ has two
  connected components, and for each $\hat w\in E(\RR)$ the element
  $2\hat w$ is on the connected component of $0_E$.)
\end{proof}
By an \emph{automorphism} of an elliptic curve we mean a birational
map of the curve to itself which need not fix the neutral element.

Pick $c\in E(\CC)$ of order $\ell$ such that $\bar c\notin\gen{c}$,
and set $C=\gen{c}$.

Let $\Phi:E\to E'=E/C$ be the isogeny with kernel $C$. Let
$\Phi':E'\to E$ be the dual isogeny. Then $\Phi'\circ\Phi:E\to E$ is
the multiplication by $\ell$ map on $E$.

For $w\in E(\RR)$ as in the previous lemma define involutory
automorphisms
\begin{itemize}
\item $\beta$ of $E$ by $\beta(p)=w-p$,
\item $\beta'$ of $E'$ by $\beta'(p')=\Phi(w)-p'$, and
\item $\beta''$ of $E$ by $\beta''(p'')=\Phi'(\Phi(w))-p''=\ell w-p''$.
\end{itemize}
Note that $\beta'(\Phi(p))=\Phi(w)-\Phi(p)=\Phi(w-p)=\Phi(\beta(p))$,
so
\begin{equation}\label{e:betaphi}
\beta'\circ\Phi=\Phi\circ\beta\text{ and likewise
}\beta''\circ\Phi'=\Phi'\circ\beta'.
\end{equation}
In the following we show that
\begin{itemize}
\item $E/\gen{\beta}$, $E'/\gen{\beta'}$, and $E/\gen{\beta''}$
are projective lines, and
\item that there are degree $2$ branched covering maps $\psi$,
  $\psi'$, and $\psi''$ from the elliptic curves to these lines,
\item such that $\psi$ and $\psi''$ are defined over $\RR$, and
\item that after selecting uniformizing elements of the projective
  lines, there are unique rational functions $f$ and $g$ such that the
  following diagram commutes:
\end{itemize}
\[
\begin{tikzcd}
E \arrow{r}{\Phi}\arrow{d}{\Psi}\arrow[bend left]{rr}{\text{multiplication by $\ell$}}
& E' \arrow{r}{\Phi'} \arrow{d}{\Psi'}
& E \arrow{d}{\Psi''}\\
E/\gen{\beta} \arrow{r}{g} &  E'/\gen{\beta'} \arrow{r}{f} & E/\gen{\beta''} 
\end{tikzcd}
\]
We give an algebraic rather than a geometric description of the
functions $f$ and $g$. This has the advantage that the method can be
used to compute explicit examples, as we do at the end of this
section.

Let $\CC(E)$ and $\CC(E')$ be the function fields of $E$ and $E'$,
respectively. Let $x$ and $y$ be the coordinate functions with
$x(p)=u$ and $y(p)=v$ for $p=(u,v)\in E(\CC)$. So $E(\CC)=\CC(x,y)$
with $y^2=x^3+ax+b$. The comorphism $\beta^\star$ is an automorphism
of order $2$ of the real function field $\RR(E)$ (recall that $w\in
E(\RR)$). We compute the fixed field of $\beta^\star$ in $\RR(E)$:
Write $w=(w_x,w_y)$, and set $z=\frac{w_y+y}{w_x-x}$ (this choice of
$z$ is taken from \cite{macrae_samuel}). The addition formula for
elliptic curves shows that
\[
\beta^\star(x)=z^2-w_w-x\text{ and
}\beta^\star(y)=z(w_x-\beta^\star(x))-w_y.
\]
From that we get
\[
\beta^\star(z)=\frac{w_y+\beta^\star(y)}{w_x-\beta^\star(x)}=
\frac{w_y+z(w_x-\beta^\star(x))-w_y}{w_x-\beta^\star(x)}=z,
\]
so $z$ is in the fixed field of $\beta^\star$. Clearly
$\RR(x,z)=\RR(y,z)=\RR(x,y)$. Let $F\subseteq\RR(E)$ be the fixed
field of $\beta^\star$. From $z\in F$ and $[\RR(E):F]=2$ we get
$F=\RR(z)$ once we know that $[\RR(E):\RR(z)]\le 3$. But this holds,
as
\[
(z(w_x-x)-w_y)^2=y^2=x^3+ax+b,
\]
so $x$ has at most degree $3$ over $\RR(z)$. Now $\psi$ is just the
rational function $E\to\hat\CC$ for which $z=\psi(x,y)$.

Suppose (without loss of generality) that $E'$ has also a
Weierstrass from $Y^2=X^3+a'X+b'$, and let $x'$ and $y'$ be the
associated coordinate functions. Note that
$\Phi((u,v))=(A(u),B(u)v)$ for rational functions
$A,B\in\CC(z)$ and all $p=(u,v)\in E(\CC)$. Therefore
$\Phi^\star(x')=A(x)$, where $\Phi^\star:\CC(E')\to\CC(E)$ is the
comorphism of $\Phi$.

Pick $z'$ in $\CC(x',y')$ such that $\CC(z')$ is the fixed field of
$\beta'^\star$. (Here $z'$ can not be taken from $\RR(x',y')$, because
$E'$ is not defined over $\RR$.) As before, let $\psi'$ be the
rational function with $z'=\psi'(x',y')$.

From \eqref{e:betaphi} we obtain
$\Phi^\star\circ\beta'^\star=\beta^\star\circ\Phi^\star$, so
$\Phi^\star(z')=\beta^\star(\Phi^\star(z'))$ and hence
$\Phi^\star(z')\in\CC(z)$. So $\Phi^\star(z')=g(z)$ for a rational
function $g\in\CC(z)$. This is just the algebraic description of $g$
from above. Similarly one computes $f$.

For the rest of this section, we work with the morphisms rather than
the comorphisms. Recall that $\Psi$ and $\Psi''$ are defined over
$\RR$. So $\overline{\Psi(p)}=\Psi(\bar p)$ for all $p\in E(\CC)$.

We now prove the required properties of $f$ and $g$. The assertion
about the degrees follows from well-known facts about isogenies.
\begin{itemize}
\item[(a)] As the multiplication by $\ell$ map is defined over $\RR$,
  and so are $\Psi$ and $\Psi''$, we have $f\circ g\in\RR(x)$.
\item[(b)] We next show that $g$ is injective on
  $\hat\RR$. Suppose that there are distinct
  $z_1,z_2\in\hat\RR$ such that $g(z_1)=g(z_2)$. Pick $p,q\in
  E(\CC)$ such that $\Psi(p)=z_1$, $\Psi(q)=z_2$. Then
\[
\Psi'(\Phi(p))=g(\Psi(p))=g(z_1)=g(z_2)=g(\Psi'(q))=\Psi'(\Phi(q)),
\]
so $\Phi(p)=\Phi(q)$ or $\Phi(p)=\Phi(w)-\Phi(q)$. Upon possibly
replacing $q$ with $w-q$ we may and do assume $\Phi(p)=\Phi(q)$, hence
$p-q\in C$.

Recall that $\Psi$ is defined over $\RR$ and $\Psi(p)=z_1$ is real. So
$\Psi(\bar p)=\Psi(p)$, and therefore $\bar p=p$ or $\bar
p=w-p$. Likewise $\bar q=q$ or $\bar q=w-q$. Recall that $p-q\in C$,
and that $C\cap\bar C=\set{0_E}$ by the choice of $C$. So we can't
have $(\bar p,\bar q)=(p,q)$, nor $(\bar p,\bar q)=(w-p,w-q)$.

Thus, without loss of generality, $\bar p=p$ and $\bar q=w-q$. So
$p\in E(\RR)$. Note that $p-q$ and $\bar p-\bar q=p-w+q$ both have
order $\ell$. Set $r=(p-q)+(\bar p-\bar q)=2p-w$. Then $\ell r=0_E$,
and $r\in E(\RR)$. We obtain $w=2(p+\frac{\ell-1}{2}r)$ with
$p+\frac{\ell-1}{2}r\in E(\RR)$, contrary to the choice of $w$.
\item[(c)] Finally, we need to show that $g(\hat\RR)$ is
  not a circle. Suppose otherwise. Let $\lambda$ be a linear
  fractional function which maps this circle to
  $\hat\RR$. Then $\lambda\circ g$ maps $\RR$ to
  $\hat\RR$, so $\lambda\circ g\in\RR(x)$.

Then $\lambda\circ\Psi'\circ\Phi=\lambda\circ g\circ\Psi$ is defined
over $\RR$, so $\lambda(\Psi'(\Phi(p)))=\lambda(\Psi'(\Phi(\bar p)))$
for all $p\in E(\CC)$. As $\lambda$ is bijective, $\Psi'$ respects
$\beta'$, and $C$ is the kernel of $\Phi$, we get that for each $p\in
E(\CC)$ either $p-\bar p\in C$, or $p+\bar p-w\in C$.

In the first case note that $\bar p-p\in\bar C$, so also $p-\bar
p=-(\bar p-p)\in\bar C$, and therefore $p-\bar p\in C\cap\bar
C=\set{0_E}$. So $p\in E(\RR)$ if the first case happens.

We see that $p+\bar p-w\in C$ whenever $p\in E(\CC)\setminus
E(\RR)$. Recall that $w\in E(\RR)$. So $p+\bar p-w\in C\cap\bar
C=\set{0_E}$, and therefore $p+\bar p=w$ for all $p\in E(\CC)\setminus
E(\RR)$. As $(p+q)+\overline{p+q}=2w\ne w$ for all $p,q\in
E(\CC)\setminus E(\RR)$, we get the absurd consequence that $p+q\in
E(\RR)$ whenever $p,q\in E(\CC)\setminus E(\RR)$, so $E(\RR)$ is a
subgroup of index $2$ in $E(\CC)$. This final contradiction proves all
the properties about the functions $f$ and $g$.
\end{itemize}
\begin{Remark}
  For fixed curves $E$, $E'$ and isogeny $\Phi$ as in the proof of
  Theorem \ref{T:ell}, and $w\in E(\RR)$ (which need not fulfill the
  property of Lemma \ref{L:w}), let $h_w=f\circ g\in\RR(z)$ be the
  rational function constructed there. The case $w=0_E$ gives a
  Latt\`es function $h_0\in\RR(z)$. It is easy to see that
  $h_w=\lambda_1\circ h_0\circ\lambda_2$ for linear fractional
  functions $\lambda_1,\lambda_2\in\CC(z)$. If $w$ has the property
  from Lemma \ref{L:w}, then $\lambda_1,\lambda_2$ cannot be chosen in
  $\RR(z)$. So $h_w$ is a twist of $h_0$ over a quadratic
  field. Therefore, the relation of $h_w$ to the Latt\`es map $h_0$ is
  analogous to the relation of R\'edei functions to cyclic polynomials
  $z^n$.

A construction like $h_w$ appeared in an arithmetic context in
\cite{GMS}.

Latt\`es functions, which were known before Latt\`es work in 1918, are
classical objects in complex analysis. See \cite{silverman:dynamical}
and \cite{milnor:lattes} for the relevance of these functions in
complex dynamics, and especially \cite{milnor:lattes} for a lot of
information about the history of these functions. Latt\`es functions
also appeared in 1877 in the context of approximation theory in work
by Zolotarev. Today they are called Zolotarev functions in
approximation theory.
\end{Remark}
\begin{Example}
  Here we explicitly compute an example for the case $\ell=3$. We aim
  to find an example where the elliptic curve $E$ is defined over
  $\QQ$, $f\circ g \in\QQ(z)$, and $f,g\in K(z)$, where $K$ is an as
  small as possible number field. Let $\omega$ be a primitive third
  root of unity, so $\omega^2+\omega+1$ and $\bar\omega=-1-\omega$. As
  $c\in E(\CC)$ is required to be a non-real point, and the
  coordinates of the $\ell$-torsion group of an elliptic curve over
  $\QQ$ generate the field of $\ell$-th roots of unity, we necessarily
  have $\omega\in K$. Indeed, there are examples with $K=\QQ(\omega)$.

  The in terms of the conductor smallest elliptic curve $E$ over
  $\QQ$ which has a $3$-torsion point in $E(K)\setminus E(\QQ)$ has
  the Cremona label 14a2 and Weierstrass form
  $Y^2=X^3-46035X-3116178$. One computes that $c=(72\omega - 33,
  1080\omega - 648)\in E(\CC)$ has order $3$. Set $C=\gen{c}$. Then
  $C\cap\bar C=\set{0_E}$. There is an isogeny $\Phi:E\to E'$ with
  kernel $C$, where $E'$ is given by
  $Y^2=X^3+(298080\omega+537165)X+(86819040\omega-39204594)$.

  Set $w=(-78,0)\in E(\QQ)$. The $X$-coordinates of $\hat w$ with
  $2\hat w=w$ are roots of $X^2 + 156X + 33867=(X+78)^2+27783$, so
  there is no $\hat w\in E(\RR)$ with $2\hat w=w$.

  Thus $E$, $C$ and $w$ fulfill all the assumptions which we needed in
  the existence proof of $f(z)$ and $g(z)$. We now compute these
  functions. Let $\beta$ and $\beta'$ be the automorphisms of $E$ and
  $E'$ given by $\beta(p)=w-p$ and $\beta'(p')=\Phi(w)-p'$. Write
  $w=(w_x,w_y)$ and $\Phi(w)=(w_x',w_y')$. Set $z=\frac{w_y+y}{w_x-x}$
  and $z'=\frac{w_y'+y'}{w_x'-x'}$. Recall that $\Phi^\star(x')=A(x)$,
  where $\Phi((u,v))=(A(u),B(u)v)$ for all $(u,v)\in E(\CC)$. From
  that we see also $\Phi^\star(y')=B(x)y$.

Now recall that the function $g(z)$ we are looking for fulfills
$g(z)=\Phi^\star(z')$. We compute
\[
g(z)=\Phi^\star\left(\frac{w_y'+y'}{w_x'-x'}\right)=\frac{w_y'+B(x)y}{w_x'-A(x)}.
\]
Use this equation, and the equations $z=\frac{w_y+y}{w_x-x}$ and
$y^2=x^3-46035x-3116178$ to eliminate the variables $x$ and $y$. So we
are left with a polynomial equation in $z$ and the unknown function
$g(z)$ which we treat as a variable. This polynomial has a factor of
degree $1$ with respect to $g(z)$, from which we obtain
$g(z)$. Analogously we get $f(z)$. After minor linear changes over
$\QQ$ (which slightly simplify $f$ and $g$) we obtain
\begin{align*}
f(z) &= \frac{z^3 - 6(\omega + 1)z}{3z^2 + 1}\\
g(z) &= \frac{2z^3 + (\omega + 1)z}{z^2 - \omega}\\
f(g(z)) &=  \frac{8z^9 - 24z^5 - 13z^3 - 6z}{12z^8 + 13z^6 + 12z^4
- 1}\in\QQ(z).
\end{align*}
The following plot shows the image of $\hat\RR$ under
$\frac{1}{1+g(z)}$. As expected, this curve is a Jordan curve, but not
a circle.
\includegraphics[width=.3\textwidth]{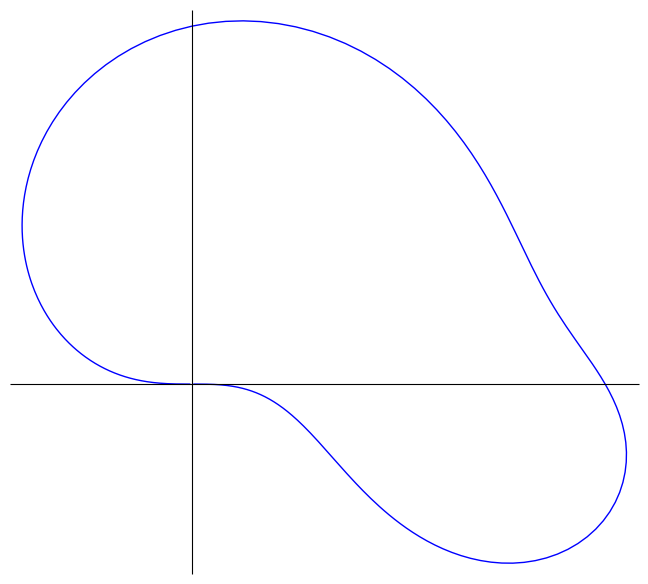}
\end{Example}
\section{Proof of Proposition \ref{P:Gs}}\label{S:Gs}
If $G$ acts on a set $\Omega$, then $\omega^g$ denotes the image of
$\omega\in\Omega$ under $g\in G$. Furthermore, $G_\omega=\defset{g\in
  G}{\omega^g=\omega}$ is the stabilizer of $\omega$ in $G$.

For $g,h\in G$ we write $g^h$ for the conjugate $h^{-1}gh$ of $g$
under $h$. Similarly, if $S$ is a subset or subgroup of $G$, then
$S^h=\defset{s^h}{s\in S}$.

If $G$ is transitive on $\Omega$, then
$\emptyset\ne\Delta\subseteq\Omega$ is called a block if
$\Delta=\Delta^g$ or $\Delta\cap\Delta^g=\emptyset$ for each $g\in
G$. If this is the case, then $\Omega$ is a disjoint union of sets
$\Delta_i=\Delta^{g_i}$ for $g_i$ in a subset of $G$. These sets
$\Delta_i$ are called a block system. Note that $G$ acts by permuting
these sets $\Delta_i$.

We assume that Proposition \ref{P:Gs} is false, so there is a group
$M$ with $G_\omega\le M\le G$ and $M\ne M^\sigma$. Among the
counterexamples with $\abs{G}$ minimal we pick one with $\abs{\Omega}$
minimal. In a series of lemmas we derive properties of such a
potential counterexample, and eventually we will see that it does not
exist.

Note that $G_\omega^\sigma$ fixes $\omega^\sigma=\omega$, hence
$G_\omega^\sigma\le G_\omega$ and therefore
$G_\omega=G_\omega^\sigma$, a fact we will use frequently. Another
trivial fact which we use throughout the proof is the following: If
$B$ is a subgroup of $G$, then $\sigma$ normalizes $B\cap B^\sigma$
and $\gen{B,B^\sigma}$.
\begin{Lemma}
  $G$ is transitive on $\Omega$.
\end{Lemma}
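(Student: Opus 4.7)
The claim is in fact built into the hypothesis of Proposition~\ref{P:Gs}, which posits that $G$ acts transitively on $\Omega$. Since our minimal counterexample $(G,\Omega,\sigma,\omega,M)$ satisfies all the hypotheses of the Proposition, it inherits this transitivity, and the proof reduces to a one-line appeal to the hypothesis. My plan is therefore to simply cite the hypothesis, perhaps after spelling out a short minimality reduction to foreshadow the style of the lemmas to follow.

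If I wanted to present this as a genuine reduction (in the spirit of the later lemmas), I would set $\Delta=\omega^G$ and argue as follows. First, $\sigma$ normalizes $G$ and fixes $\omega$, so it permutes the $G$-orbits on $\Omega$ and fixes the orbit through $\omega$; hence $\Delta^\sigma=\Delta$ and $\sigma$ restricts to a permutation $\tau=\sigma|_\Delta$ of $\Delta$. Second, restricting the action does not change the point stabilizer $G_\omega$, so the chain $G_\omega\le M\le G$ and the inequality $M\ne M^\sigma$ persist verbatim. Third, since $M\ne M^\sigma$ rules out the degenerate situation $G_\omega=G$, we have $|\Delta|\ge2$, which guarantees that $\tau$ is an involution on $\Delta$ with unique fixed point $\omega$. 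Thus $(G,\Delta,\tau,\omega,M)$ is still a counterexample, and the minimality of $|\Omega|$ forces $\Delta=\Omega$, i.e.\ $G$ is transitive on $\Omega$.

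The main obstacle: essentially none, since the statement is a formality. The only mild subtlety is checking that the restricted $\tau$ remains a nontrivial involution with exactly one fixed point, which is immediate from $|\Delta|\ge 2$. Accordingly, I expect this lemma to be recorded here mainly as a piece of bookkeeping that sets up notation and the style of minimality arguments used in the subsequent lemmas.
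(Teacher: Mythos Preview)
Your proposal is correct, and your observation that transitivity is literally part of the hypothesis of Proposition~\ref{P:Gs} is valid: any counterexample to the Proposition is, by definition, a transitive action, so the Lemma is indeed a one-liner as stated.

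Nevertheless, the paper does not argue this way. It gives precisely the minimality reduction you outline as your alternative: set $\Delta=\omega^G$, rule out $\Delta=\{\omega\}$ since that would force $M=G=M^\sigma$, note $\Delta^\sigma=\omega^{G\sigma}=\omega^{\sigma G}=\omega^G=\Delta$, and then appeal to the minimal-counterexample assumption on the action of $\langle G,\sigma\rangle$ on $\Delta$ to conclude $M=M^\sigma$, a contradiction. So your second paragraph matches the paper's proof essentially step for step. One small point: your phrasing ``minimality of $|\Omega|$'' glosses over the two-tier minimality (first $|G|$, then $|\Omega|$); if the restricted action on $\Delta$ were not faithful one would in principle be invoking minimality of $|G|$ instead, with the kernel contained in $G_\omega\le M\cap M^\sigma$ so that the conclusion $M=M^\sigma$ still lifts. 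The paper is equally silent on this, so it is not a defect peculiar to your write-up.

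In short: your direct appeal to the hypothesis is the cleanest argument; the paper instead gives the reduction argument you sketch, presumably as a warm-up for the style of the subsequent lemmas (or with an implicit broadening of ``counterexample'' to drop the transitivity hypothesis).
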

\begin{proof}
  Set $\Delta=\omega^G$. If $\Delta=\Omega$ then we are done. So
  assume that $\Delta\subsetneq\Omega$. If $\Delta=\set{\omega}$, then
  $G_\omega=G$, and therefore of course $M=G=M^\sigma$.

Thus $\set{\omega}\subsetneq\Delta\subsetneq\Omega$. Note that
$\Delta^\sigma=\omega^{G\sigma}=\omega^{\sigma G}=\omega^G=\Delta$. By
the assumption of a minimal counterexample, we obtain that the
proposition holds for the action of $\gen{G,\sigma}$ on $\Delta$,
hence $M=M^\sigma$, a contradiction.
\end{proof}
\begin{Lemma}\label{L:MMs}
$G=\gen{M,M^\sigma}$.
\end{Lemma}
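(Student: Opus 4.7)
The plan is to set $H=\gen{M,M^\sigma}$ and apply the minimality of the chosen counterexample to the action of $H$ on the $H$-orbit of $\omega$. The key preliminary observations are immediate: $H^\sigma=\gen{M^\sigma,M}=H$, and $G_\omega\le M\le H$. If $H=G$ we are done, so suppose $H\lneq G$ and aim for a contradiction.

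Set $\Delta=\omega^H$. Then $\Delta^\sigma=\omega^{H^\sigma}=\omega^H=\Delta$, so $\sigma$ preserves $\Delta$ setwise; since $\sigma$ fixes only $\omega$ in $\Omega$, it fixes exactly one element of $\Delta$, namely $\omega$. Moreover $\Delta\ne\set{\omega}$, since $\Delta=\set{\omega}$ would force $H\le G_\omega$, hence $M=G_\omega=M^\sigma$, contrary to assumption. And $\Delta\ne\Omega$: otherwise the orbit-stabilizer argument applied to the transitive action of $H$, combined with $G_\omega\le H$, yields $G=HG_\omega=H$, contradicting $H\lneq G$. Thus $(H,\Delta,\sigma|_\Delta)$ is a triple to which Proposition~\ref{P:Gs} applies, with $H$ transitive on $\Delta$.

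Because $\abs{H}<\abs{G}$, the minimality of $\abs{G}$ in our choice of counterexample forces Proposition~\ref{P:Gs} to hold for the triple $(H,\Delta,\sigma|_\Delta)$. Applying it to the subgroup $M$, which satisfies $H_\omega=G_\omega\le M\le H$, yields $M=M^\sigma$, the desired contradiction. The only step requiring any thought is the initial choice of $H$: one needs a subgroup of $G$ that is simultaneously $\sigma$-invariant, contains $G_\omega$, and (in the counterexample scenario) lies strictly below $G$, and $\gen{M,M^\sigma}$ is essentially the forced candidate. Everything else is a routine verification of hypotheses.
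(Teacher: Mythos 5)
Your proof is correct and is essentially the paper's argument: form $H=\gen{M,M^\sigma}$, observe it is $\sigma$-invariant, contains $G_\omega$, and is proper in a putative counterexample, then invoke the minimality of $\abs{G}$. The paper states this in two lines (leaving the restriction to the orbit $\omega^H$ implicit, as that reduction is handled by its first lemma); you have merely spelled out the verification of the hypotheses.
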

\begin{proof}
  Set $H=\gen{M,M^\sigma}$. Then $H^\sigma=H$, so if $H$ is a proper
  subgroup of $G$, then $M=M^\sigma$ by the minimality assumption of a
  counterexample.
\end{proof}
\begin{Lemma}
$M\cap M^\sigma=G_\omega$.
\end{Lemma}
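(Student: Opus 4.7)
The inclusion $G_\omega \le M \cap M^\sigma$ is immediate: $G_\omega = G_\omega^\sigma$ gives $G_\omega \le M^\sigma$ in addition to the trivial $G_\omega \le M$. Setting $B = M \cap M^\sigma$ and $\Sigma = \omega^B$, my plan for the reverse inclusion is to assume $\Sigma \ne \{\omega\}$ and produce a strictly smaller permutation representation to which Proposition~\ref{P:Gs} applies by the minimality of the chosen counterexample, forcing $M^\sigma = M$---a contradiction.

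The first step identifies $\Sigma$ as a block. I claim $\Sigma = \Delta \cap \Delta^\sigma$ with $\Delta = \omega^M$. The inclusion ``$\subseteq$'' follows from $B \le M$ and $B = B^\sigma \le M^\sigma$. Conversely, any $x = \omega^m = \omega^{n^\sigma}$ in $\Delta \cap \Delta^\sigma$ satisfies $m(n^\sigma)^{-1} \in G_\omega \le B \le M^\sigma$, which combined with $m \in M$ forces $m \in M \cap M^\sigma = B$. From this description, $\Sigma$ is a $G$-block with setwise stabiliser $B$: if $g \in G$ preserves $\Sigma$ then $g(\omega) \in \Sigma \subseteq \Delta$ forces $g(\Delta) = \Delta$ and so $g \in M$, and the same argument with $\Delta^\sigma$ yields $g \in M^\sigma$. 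If $\Sigma = \Omega$, then $M$ is transitive on $\Omega$, whence $M = G$ and $M^\sigma = M$, a contradiction. Thus $\Sigma \subsetneq \Omega$, and the block system $\mathcal{C}$ of $G$-translates of $\Sigma$ satisfies $|\mathcal{C}| < |\Omega|$.

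The heart of the argument is a parity observation about $\sigma$ acting on $\mathcal{C}$. Since $B^\sigma = B$ and $\omega^\sigma = \omega$, $\Sigma$ is $\sigma$-invariant, and $\sigma|_\Sigma$ is an involution on $\Sigma \subseteq \Omega$ whose unique fixed point is $\omega$; consequently $|\Sigma|$ is odd. Every block of $\mathcal{C}$ has this same odd cardinality; so if $\sigma$ fixed some block $\Sigma' \in \mathcal{C}$ different from $\Sigma$ setwise, then $\omega \notin \Sigma'$ would force $\sigma|_{\Sigma'}$ to be fixed-point-free on an odd-sized set---impossible. Hence $\sigma$ acts on $\mathcal{C}$ as a permutation of order exactly $2$ fixing only the block $\Sigma$.

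Finally I would apply Proposition~\ref{P:Gs} to the triple $(\overline{G}, \sigma, \mathcal{C})$, where $\overline{G}$ is the image of $G$ in $\mathrm{Sym}(\mathcal{C})$. The kernel $K$ of $G \to \overline{G}$ is the normal core of $B$ in $G$, hence contained in $B \le M$, and $K^\sigma = K$ since $\sigma$ normalises $G$ and $B$. Either $K = 1$ and $|\mathcal{C}| < |\Omega|$ while $|\overline{G}| = |G|$, or $K \ne 1$ and $|\overline{G}| < |G|$; in either case the new triple is strictly smaller than our counterexample in the lexicographic order of $(|G|, |\Omega|)$, so minimality forces the proposition to hold there. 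Applying its conclusion to the image of $M$, which lies between $\overline{G}_\Sigma = B/K$ and $\overline{G} = G/K$, gives $\overline{M}^\sigma = \overline{M}$; lifting through the $\sigma$-stable kernel $K$ yields $M^\sigma = M$, contradicting the hypothesis on $M$. Hence $B = G_\omega$. The main obstacle in this plan is the parity observation in the third paragraph---once it is in place, the inductive reduction is essentially forced.
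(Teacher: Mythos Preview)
Your proof is correct and follows essentially the same approach as the paper's: pass to the block system determined by $\omega^{M\cap M^\sigma}$, use the odd block size to see that $\sigma$ fixes only one block, and invoke minimality of the counterexample on the quotient action to force $M=M^\sigma$. Your extra observation that $\Sigma=\Delta\cap\Delta^\sigma$ with $\Delta=\omega^M$ and your explicit verification that the setwise stabilizer of $\Sigma$ is exactly $B$ are not in the paper's proof (which simply cites the standard block--subgroup correspondence), but they are correct and do no harm.
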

\begin{proof}
  Set $W=M\cap M^\sigma$. Note that $G_\omega\le W$ and
  $W^\sigma=W$. Therefore $\Delta=\omega^W$ is a block for the action
  of $\gen{G,\sigma}$ on $\Omega$, and $\Delta^\sigma=\Delta$. Let
  $\bar\Omega$ be the block system which contains $\Delta$, so
  $\gen{G,\sigma}$ acts on $\bar\Omega$. By the transitivity of $G$
  all blocks in $\bar\Omega$ have the same size, and this size divides
  the odd number $\abs{\Omega}$. So the blocks have odd size,
  therefore $\sigma$ has a fixed point in each block which is fixed
  setwise. Thus $\Delta$ is the only block fixed by $\sigma$. For
  $g\in G$ let $\bar g$ be the induced permutation on
  $\bar\Omega$. The stabilizer of $\Delta$ in $\bar G$ is $\bar W$.

  Now suppose that $W>G_\omega$, hence $\abs{\Delta}>1$ and therefore
  $\abs{\bar\Omega}<\abs{\Omega}$. Note that $\bar W\le\bar M$. So the
  proposition applies and yields $\bar M^\sigma=\bar M$. But the
  kernel of the map $g\mapsto\bar g$ is contained in $W=M\cap
  M^\sigma$, so $M^\sigma=M$, a contradiction.
\end{proof}
\begin{Lemma}\label{L:crucial}
If $B\le M$, then either $G=\gen{B,B^\sigma}$, or $B\le G_\omega$.
\end{Lemma}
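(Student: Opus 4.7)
The plan is to set $H=\gen{B,B^\sigma}$, which is automatically $\sigma$-invariant, and to rule out the case $H\ne G$ unless $B\le G_\omega$. Since $\sigma$ fixes $\omega$ and normalizes $H$, the orbit $\Delta=\omega^H$ is $\sigma$-invariant; if $\Delta=\set{\omega}$ then $H\le G_\omega$ and in particular $B\le G_\omega$, so it suffices to handle $\abs{\Delta}\ge 2$. In this case the restriction $\bar\sigma$ of $\sigma$ to $\Delta$ has order $2$ (as $\omega$ is the only fixed point of $\sigma$ in $\Omega$) and fixes only $\omega\in\Delta$.

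To invoke the minimality of the counterexample I would pass to the faithful action of $H$ on $\Delta$: write $\bar H$ for the image of $H$ in the symmetric group on $\Delta$ and $N$ for the kernel of $H\to\bar H$. The triple $(\bar H,\Delta,\bar\sigma)$ satisfies the hypotheses of the proposition, with $\abs{\bar H}\le\abs{H}<\abs{G}$, so by minimality its conclusion holds. I would then apply it to $M''=\gen{B,H_\omega}$, where $H_\omega=H\cap G_\omega$: since $H_\omega\le M''\le H$, the image $\overline{M''}$ is stabilized by $\bar\sigma$. Because $N^\sigma=N$ (as $N$ is the kernel of a $\sigma$-equivariant map) and $N\le H_\omega\le M''$, the equality $\overline{M''}^{\bar\sigma}=\overline{M''}$ lifts to $M''^\sigma=M''$ in $H$. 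Expanding via $H_\omega^\sigma=H_\omega$ gives $\gen{B,H_\omega}=\gen{B^\sigma,H_\omega}$, hence $B^\sigma\le M''$ and therefore $H=\gen{B,B^\sigma}\le M''\le H$, forcing $H=M''$.

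The conclusion is then immediate: $M''=\gen{B,H_\omega}\le M$ because $B\le M$ and $H_\omega\le G_\omega\le M$, so $H\le M$; applying $\sigma$ gives $H\le M^\sigma$ as well, and the previous lemma yields $H\le M\cap M^\sigma=G_\omega$. But $H$ is transitive on $\Delta$, so $H\le G_\omega$ forces $\Delta=\set{\omega}$, contradicting $\abs{\Delta}\ge 2$. The main subtlety will be the passage to the faithful action $\bar H$ so as to legitimately invoke the minimality hypothesis; the $\sigma$-invariance of the kernel $N$ (and of $H_\omega$) is precisely what makes the lifting from $\bar H$ back to $H$ routine.
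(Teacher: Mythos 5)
Your proof is correct and follows essentially the same route as the paper: apply the minimality hypothesis to $H=\gen{B,B^\sigma}$ and the intermediate group $\gen{B,H_\omega}$ to get its $\sigma$-invariance, then combine $B\le M$ with $M\cap M^\sigma=G_\omega$. You are merely more explicit than the paper about restricting to the orbit $\Delta=\omega^H$ and passing to the faithful quotient before invoking minimality, which is a legitimate (and welcome) filling-in of details the paper leaves implicit.
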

\begin{proof}
  Set $H=\gen{B,B^\sigma}$, and suppose that $H<G$. Hence the
  proposition holds for $H$, in particular
  $\gen{B,H_\omega}^\sigma=\gen{B,H_\omega}$. Thus
  $B\le\gen{B,H_\omega}^\sigma\le\gen{M,G_\omega}^\sigma=M^\sigma$. Together
  with the previous Lemma we get $B\le M\cap M^\sigma=G_\omega$.
\end{proof}

\begin{Lemma}
$G$ has even order.
\end{Lemma}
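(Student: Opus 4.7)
Suppose for contradiction that $|G|$ is odd. Then $G$ contains no involution, so $\sigma\notin G$; moreover any $g\in G$ commuting with $\sigma$ sends $\omega$ to a $\sigma$-fixed point, which must be $\omega$, so $C_G(\sigma)\le G_\omega$.

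The plan is to reduce to the case that $G$ is abelian and then obtain a direct contradiction. For the reduction I would invoke the Feit--Thompson theorem: $G$ is solvable, so $G'<G$, and $G'$ is $\sigma$-invariant because it is characteristic. Applying Lemma~\ref{L:crucial} to $B=M\cap G'$, the alternative $G=\langle B,B^\sigma\rangle\subseteq G'$ is excluded, so $M\cap G'\le G_\omega$. Iterating this argument along a $\sigma$-invariant chief series of $G$, and using the minimality of the counterexample to descend to quotients, one shows that $G$ has no proper nontrivial $\sigma$-invariant normal subgroup; combined with solvability this forces $G$ to be abelian.

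In the abelian case, Lemma~\ref{L:MMs} together with commutativity gives $MM^\sigma=\langle M,M^\sigma\rangle=G$, and together with $M\cap M^\sigma=G_\omega$ (previous lemma) this yields the internal direct product decomposition $G/G_\omega=(M/G_\omega)\oplus(M^\sigma/G_\omega)$. Conjugation by $\sigma$ preserves $G_\omega$ and interchanges the two summands via the isomorphism induced by $\sigma$ itself, so the $\sigma$-fixed subgroup of $G/G_\omega$ is the ``diagonal'', of cardinality $|M/G_\omega|$. Under the $G$-equivariant identification $G/G_\omega\cong\Omega$, this cardinality equals the number of $\sigma$-fixed points of $\Omega$, which is $1$ by hypothesis. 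Hence $|M/G_\omega|=1$, i.e.\ $M=G_\omega=M^\sigma$, contradicting $M\ne M^\sigma$.

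The main technical obstacle is the reduction step. Invoking Feit--Thompson is a convenient shortcut, but a self-contained inductive argument using Lemma~\ref{L:crucial} along a $\sigma$-invariant chief series should also work; the subtle point is that passing to $G/N$ for a minimal $\sigma$-invariant normal subgroup $N$ requires verifying that the induced involution $\bar\sigma$ still has a unique fixed block on the block system of $N$-orbits, so that the minimality of the counterexample may be applied.
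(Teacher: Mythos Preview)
Your argument in the abelian case is correct and clean: the direct product decomposition $G/G_\omega=(M/G_\omega)\times(M^\sigma/G_\omega)$ with $\sigma$ swapping the factors forces $|M/G_\omega|$ to equal the number of $\sigma$-fixed points in $\Omega$, hence $M=G_\omega$.

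The reduction to the abelian case, however, is a genuine gap, and you acknowledge as much. The step ``using the minimality of the counterexample to descend to quotients'' is where it breaks: if $N\lhd G$ is a proper nontrivial $\sigma$-invariant normal subgroup and $K$ is the kernel of the action on $N$-orbits, you would need the image $\bar M=MK/K$ to satisfy $\bar M\ne\bar M^{\bar\sigma}$ in order to invoke minimality. But $\bar M=\bar M^{\bar\sigma}$ only says $MK=M^\sigma K$, which does not force $M=M^\sigma$; so the quotient need not be a counterexample, and the induction does not go through as stated. Your earlier observation that $M\cap G'\le G_\omega$ via Lemma~\ref{L:crucial} is correct but does not by itself iterate to kill $G'$. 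And of course, reaching for Feit--Thompson here is an enormous hammer.

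The paper's proof avoids all of this with a short elementary trick. Take any $g\in M\setminus G_\omega$ and note that $\sigma$ inverts the cyclic group $\langle gg^{-\sigma}\rangle$; since this group has odd order, $gg^{-\sigma}=h^2$ for some $h$ in it. Setting $c=hg^\sigma$, one checks directly that $c^\sigma=c$, so $c\in C_G(\sigma)\le G_\omega$ (your first paragraph), and that $cg^{-\sigma}c=g$. Hence $g\in G_\omega\, g^{-\sigma}\, G_\omega\subseteq M^\sigma$, contradicting $M\cap M^\sigma=G_\omega$. No solvability, no quotients, no chief series --- just the fact that elements of odd-order groups have square roots.
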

\begin{proof}
Suppose that the order of $G$ is odd. Pick $g\in M\setminus G_\omega$. Note that
\[
(gg^{-\sigma})^\sigma=g^\sigma g^{-1}=(gg^{-\sigma})^{-1},
\]
so $\sigma$ acts on $\gen{gg^{-\sigma}}$ by inverting the elements. As
$\gen{gg^{-\sigma}}$ has odd order, there is $h\in\gen{gg^{-\sigma}}$
with $gg^{-\sigma}=h^2$. Set $c=hg^\sigma$. First note that $c$ is
fixed under $\sigma$:
\[
c^\sigma=(hg^\sigma)^\sigma=h^\sigma
g=h^{-1}g=h^{-1}h^2g^\sigma=hg^\sigma=c
\]
From this we obtain that $c$ permutes the fixed points of $\sigma$, so
$c\in G_\omega$ because $\omega$ is the only fixed point of $\sigma$.

Another calculation shows
\[
cg^{-\sigma}c=hg^\sigma g^{-\sigma}hg^\sigma=h^2g^\sigma=g,
\]
hence
\[
g\in\gen{G_\omega,g^{-\sigma}}\le M^\sigma,
\]
contrary to $M\cap M^\sigma=G_\omega$ and the choice of $g$.
\end{proof}
\begin{Lemma}
$M$ contains at least one involution which is not contained in $G_\omega$.
\end{Lemma}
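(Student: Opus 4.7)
The plan is to argue by contradiction: suppose every involution of $M$ lies in $G_\omega$.  Since $[G:M]$ divides the odd integer $\abs{\Omega}=[G:G_\omega]$, the $2$-parts of $\abs{G}$ and $\abs{M}$ coincide, so by the previous lemma $\abs{M}$ is even and $M$ contains involutions---all in $G_\omega$ by hypothesis.  Now fix any $g\in M\setminus G_\omega$ and consider $u:=gg^{-\sigma}$.  A direct computation gives $u^\sigma=u^{-1}$, so $\sigma$ acts by inversion on $\gen{u}$.  I would first exclude the case that $u$ has odd order: then squaring is bijective on $\gen{u}$, one can pick $h\in\gen{u}$ with $h^2=u$, and the argument from the proof that $\abs{G}$ is even applies verbatim---$c:=hg^\sigma$ satisfies $c^\sigma=c$ (so $c\in G_\omega$) and $cg^{-\sigma}c=g$, which yields $g\in\gen{G_\omega,g^{-\sigma}}\le M^\sigma$, contradicting $g\in M\setminus G_\omega$ and $M\cap M^\sigma=G_\omega$.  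Note that this portion of the previous lemma only needed $u$ to have odd order, not $\abs{G}$ itself to be odd.

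Hence $u$ has even order $2k$, and $v:=u^k$ is an involution with $v^\sigma=v^{-1}=v$.  So $v$ commutes with $\sigma$, permutes the fixed points of $\sigma$, and thus fixes $\omega$, giving $v\in G_\omega$.  The case $k=1$ can be ruled out as well: it would mean $u=v\in G_\omega$, so $g^\sigma\in G_\omega g\subseteq M$, and then $g^\sigma\in M\cap M^\sigma=G_\omega$, forcing $g\in G_\omega$, contradiction.  Therefore $k\ge 2$ for every $g\in M\setminus G_\omega$, and the associated $v=(gg^{-\sigma})^k$ is a non-trivial involution of $G_\omega$ tightly bound to $g$ and $g^\sigma$.

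The concluding step---and where I expect the main difficulty to lie---is to parlay $v$, $g$, and $g^\sigma$ together with the relation $G=\gen{g,g^\sigma}$ from Lemma~\ref{L:crucial} into an honest involution of $M\setminus G_\omega$, directly contradicting the standing assumption.  Two approaches I would try: first, choose $g$ of minimal order in $M\setminus G_\omega$ and observe that the minimality of $\abs{g}$ (applied to $\gen{g}\cap G_\omega$, which is a proper subgroup of $\gen{g}$) forces $g$ to have prime order, necessarily odd by the counter-assumption; one then inspects short words such as $vg^j$ or $g^{-i}vg^i$ and exploits the identity $v=(gg^{-\sigma})^k$ to find an involution outside $G_\omega$.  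Second, a Sylow-$2$/fusion analysis: since $[M:G_\omega]$ is odd, a Sylow $2$-subgroup $S$ of $M$ is simultaneously a Sylow $2$-subgroup of $G_\omega$ and of $G$, and the hypothesis should confine every $M$-conjugate of a $2$-element of $M$ to $G_\omega$; applying Lemma~\ref{L:crucial} to $N_M(S)$, or to $\gen{S,g}$ for an appropriate $g\in M\setminus G_\omega$, should then yield the sought contradiction.
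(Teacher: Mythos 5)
Your proposal is not a complete proof: you explicitly stop at ``the concluding step --- and where I expect the main difficulty to lie'' and offer only two speculative directions. That concluding step is essentially the entire content of the lemma, so there is a genuine gap. The first part of your argument (the analysis of $u=gg^{-\sigma}$ for $g\in M\setminus G_\omega$) is correct as far as it goes: if $u$ has odd order the argument from the previous lemma gives a contradiction, and otherwise you extract an involution $v=u^k$ with $v^\sigma=v$, hence $v\in G_\omega$. But this gains nothing, because the existence of involutions in $G_\omega$ was already known (a Sylow $2$-subgroup of $G$ lies in $G_\omega$ since $[G:G_\omega]=\abs{\Omega}$ is odd, and $\abs{G}$ is even); what is needed is an involution \emph{outside} $G_\omega$, and the element $v$ you produce gives no leverage toward that.

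Your second sketched direction is the right one --- it is close to what the paper does --- but the details you wave at do not quite work as stated. In particular, the counter-assumption confines the \emph{involutions} of $M$ to $G_\omega$, not all $2$-elements of $M$, so one cannot run a Frattini argument with $N_M(S)$ directly. The paper instead fixes a Sylow $2$-subgroup $S\le G_\omega$ with $S^\sigma=S$ (possible because the number of Sylow $2$-subgroups of $G_\omega$ is odd and $G_\omega^\sigma=G_\omega$), and works with the \emph{set} $S_2$ of involutions of $S$. Under the counter-assumption, for every $m\in M$ the set $S_2^m$ consists of involutions of $M$, hence lies in $G_\omega$; since $\gen{S_2^m}=\gen{S_2}^m$ is a $2$-group of $G_\omega$, Sylow's theorem in $G_\omega$ yields $u\in G_\omega$ with $S_2^{mu}=S_2$, giving $M=\gen{N_M(S_2),G_\omega}$. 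Applying Lemma~\ref{L:crucial} to $N_M(S_2)$ then forces $G=\gen{N_M(S_2),N_{M^\sigma}(S_2)}$, so $\gen{S_2}$ is a nontrivial normal subgroup of $G$ contained in $G_\omega$, contradicting faithfulness of the transitive action. None of these steps (the $\sigma$-invariant choice of $S$, the passage to $S_2$, the Frattini-type generation statement, and the final normality contradiction) appear in your proposal, so the lemma remains unproved.
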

\begin{proof}
  As $[G:G_\omega]=\abs{\Omega}$ is odd, there is a Sylow $2$--subgroup $S$
  of $G$ contained in $G_\omega$. By the previous lemma, $\abs{S}>1$. As the
  number of Sylow $2$--subgroups of $G_\omega$ is odd and $G_\omega=G_\omega^\sigma$, we
  may and do pick $S$ with $S^\sigma=S$.

Let $S_2$ be the set of involutions in $S$. Then
$S_2^\sigma=S_2$. Suppose that the lemma is false. Then
$S_2^m\subseteq G_\omega$ for all $m\in M$. As $\gen{S_2^m}=\gen{S_2}^m$ is a
$2$--group in $G_\omega$, there is $u\in G_\omega$ such that $S_2^{mu}\subseteq S$,
and therefore $S_2^{mu}=S_2$. So for each $m\in M$ there is $u\in G_\omega$
such that $mu\in N_M(S_2)$, where $N_M(S_2)$ denotes the normalizer of
$S_2$ in $M$. Therefore
\begin{equation}\label{e:1}
M=\gen{N_M(S_2),G_\omega}.
\end{equation}
From $S_2^\sigma=S_2$ we obtain
\begin{equation}\label{e:2}
N_M(S_2)^\sigma=N_{M^\sigma}(S_2).
\end{equation}
Set $H=\gen{N_M(S_2),N_M(S_2)^\sigma}$. If $H<G$, then $N_M(S_2)\le
G_\omega$ by Lemma \ref{L:crucial}, and therefore $M\le G_\omega$ by
\eqref{e:1}, a contradiction.

Therefore $H=G$. Together with \eqref{e:2} we obtain
\[
G=\gen{N_M(S_2),N_M(S_2)^\sigma}=\gen{N_M(S_2),N_{M^\sigma}(S_2)},
\]
so all of $G$ normalizes $S_2$. Then $Q=\gen{S_2}$ is a nontrivial
normal subgroup of $G$ with $Q\subseteq G_\omega$, so $Q$ fixes every point
in $\Omega$, a contradiction.
\end{proof}
We now obtain the final contradiction: Let $a\in M\setminus G_\omega$
be an involution. Set $b=a^\sigma\in M^\sigma$ and let $D$ be the
dihedral group generated by $a$ and $b$. From Lemma \ref{L:crucial},
with $B=\gen{a}$, we get $D=G$.

Set $C=\gen{ab}$. Then $[G:C]=2$ (because $G=C\cup Ca$). We claim that
$C$ is transitive on $\Omega$. If this were not the case, then, by the
transitivity of $G$ and $C\triangleleft G$, $C$ would have exactly two
orbits of equal size, so $\abs{\Omega}$ were even, a contradiction.

So $G=C G_\omega$, and $C\cap G_\omega=1$, because transitive abelian
groups act regularly. The modular law yields $M=(C\cap M)G_\omega$ and
$M^\sigma=(C\cap M^\sigma)G_\omega$.

From $\abs{M}=\abs{M^\sigma}$ we get $\abs{C\cap M}=\abs{C\cap
  M^\sigma}$. But the subgroups of the cyclic group $C$ are determined
uniquely by their order, hence $C\cap M=C\cap M^\sigma$ and finally
$M=M^\sigma$.
\section{Proof of Theorems \ref{T:b} and \ref{T:RC}}
For the rational function $g(z)\in\CC(z)$ let $\bar g(z)$ be the
function with complex conjugate coefficients. Recall that $g(\hat\RR)$
is a circle in $\hat\CC$ if and only if there is a linear fractional
function $\lambda\in\CC(z)$ such that $\lambda\circ
g\in\RR(z)$. The following lemma gives a useful necessary and
sufficient criterion for this to hold. By $\CC(g(z))$ we mean the
field of rational functions in $g(z)$.
\begin{Lemma}\label{L:cocycle}
  Let $g(z)\in\CC(z)$. Then $\lambda\circ g\in\RR(z)$ for some linear
  fractional function $\lambda\in\CC(z)$ if and only if
  $\CC(g(z))=\CC(\bar g(z))$.
\end{Lemma}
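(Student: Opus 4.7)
The plan is to prove the two directions separately. The forward implication is a direct algebraic manipulation: if $h:=\lambda\circ g\in\RR(z)$, then $h=\bar h$, and from $g=\lambda^{-1}\circ h$ I conjugate the coefficients to obtain $\bar g=\bar\lambda^{-1}\circ h=(\bar\lambda^{-1}\circ\lambda)\circ g$, exhibiting $\bar g$ as a linear fractional image of $g$. Hence $\CC(\bar g)\subseteq\CC(g)$, and the reverse inclusion follows by symmetry.

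For the reverse implication, I first use that $\CC(g)=\CC(\bar g)$ means $g$ and $\bar g$ are two $\CC$-algebra generators of the same purely transcendental extension $\CC(g)\subseteq\CC(z)$; since every $\CC$-automorphism of $\CC(t)$ is a linear fractional transformation, there is a M\"obius $\mu\in\CC(z)$ with $\bar g=\mu\circ g$. Conjugating once more and cancelling the non-constant $g$ yields the cocycle relation $\bar\mu\circ\mu=\mathrm{id}$. The desired conclusion, that some linear fractional $\lambda$ satisfies $\lambda\circ g\in\RR(z)$, is equivalent to $\bar\lambda\circ\mu=\lambda$ in $\mathrm{PGL}_2(\CC)$, that is, to $\mu$ being a coboundary of the natural action of complex conjugation on $\mathrm{PGL}_2(\CC)$.

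To exhibit such a $\lambda$ I take a geometric detour. The map $\tau(w):=\bar\mu(\bar w)$ is an antiholomorphic involution of $\hat\CC$ by the cocycle identity, and a short calculation using $\bar g=\mu\circ g$ shows that every point of $g(\hat\RR)$ is fixed by $\tau$, so $\tau$ has fixed points. Antiholomorphic involutions of $\hat\CC$ are classified up to M\"obius conjugacy by the presence of fixed points, and any such involution with fixed points is conjugate to the standard conjugation $\sigma:z\mapsto\bar z$. Writing $\tau=\lambda\circ\sigma\circ\lambda^{-1}$ and unwinding gives $\mu=\bar\lambda\circ\lambda^{-1}$, so that $\lambda^{-1}$ (rather than $\lambda$) satisfies the required relation $\overline{\lambda^{-1}}\circ\mu=\lambda^{-1}$. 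Finally, $\lambda^{-1}\circ g$ carries $\hat\RR$ into the fixed set of $\sigma$, namely $\hat\RR$, hence agrees with its coefficient-wise conjugate on the infinite set $\RR$ and therefore lies in $\RR(z)$. The main obstacle is the classification of antiholomorphic involutions of $\hat\CC$, equivalently the statement that $\mu$ represents the trivial class in $H^1(\Gal(\CC/\RR),\mathrm{PGL}_2(\CC))$ as soon as $g(\hat\RR)$ is non-empty; the remaining steps are formal.
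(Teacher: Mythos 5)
Your proof is correct, and the forward direction coincides with the paper's. For the converse, however, you take a genuinely different route. The paper, after obtaining $\bar g=\rho\circ g$ from $\CC(g)=\CC(\bar g)$, simply normalizes: it post-composes $g$ with a M\"obius map so that $g$ takes the self-conjugate values $\infty,0,1$ at three real points, whereupon $\rho$ fixes three points and is forced to be the identity, giving $\bar g=g$ directly. You instead make the cohomological content explicit: you extract the cocycle relation $\bar\mu\circ\mu=\mathrm{id}$, observe that the associated antiholomorphic involution $\tau(w)=\bar\mu(\bar w)$ fixes the nonempty set $g(\hat\RR)$, and invoke the classification of antiholomorphic involutions of $\hat\CC$ (equivalently, that a class in $H^1(\Gal(\CC/\RR),\mathrm{PGL}_2(\CC))$ corresponding to a real structure with a real point is trivial) to split $\mu$ as a coboundary $\bar\lambda\circ\lambda^{-1}$; your unwinding of the conjugation identities is accurate. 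The trade-off is that your argument rests on one external classical fact which you state but do not prove, whereas the paper's normalization trick is self-contained and elementary, and---as the Remark following the lemma shows---transfers verbatim to arbitrary infinite base fields and Galois extensions, where your geometric classification of involutions of the sphere has no direct analogue. Two minor points you should make explicit: $g$ must be non-constant (otherwise $\CC(g)$ is not a rational function field of transcendence degree one and the M\"obius $\mu$ need not exist, though the lemma is trivial in that case), and this same non-constancy is what lets you cancel $g$ to pass from $\bar\lambda\circ\mu\circ g=\lambda\circ g$ to $\bar\lambda\circ\mu=\lambda$.
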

\begin{proof}
  If $\lambda\circ g\in\RR(z)$, then $\lambda\circ
  g=\overline{\lambda\circ g}=\bar\lambda\circ\bar g$, hence $\bar
  g(z)=\bar\lambda^{-1}(\lambda(g(z)))$, and therefore
  $\CC(\bar g(z))=\CC(g(z))$.

  To prove the other direction, suppose that $\CC(g(z))=\CC(\bar
  g(z))$. This assumption is preserved upon replacing $g$ with
  $\mu\circ g$ for a linear fractional function $\mu\in\CC(z)$. Thus,
  without loss of generality, we may assume that there are
  $r_1,r_2,r_3\in\RR$ with $g(r_1)=\infty$, $g(r_2)=0$,
  $g(r_3)=1$. From $\CC(g(z))=\CC(\bar g(z))$ we get $\bar
  g=\rho\circ g$ for a linear fractional function
  $\rho\in\CC(z)$. Evaluating in $r_1$, $r_2$, and $r_3$ yields that
  $\rho$ fizes $\infty$, $0$ and $1$, hence $\rho(z)=z$. So $\bar
  g=g$, and therefore $g\in\RR(z)$.
\end{proof}
\begin{Remark}
  The lemma holds more generally if we replace $\RR$ with a field $K$
  and $\CC$ with a Galois extension $E$ of $K$, and
  $\CC(g(z))=\CC(\bar g(z))$ by the condition
  $E(g(z))=E(g^\sigma(z))$ for all $\sigma\in\Gal(E/K)$. Indeed, if
  $K$ is an infinite field, then we find $r_1,r_2,r_3\in K$ such that
  the values $g(r_1)$, $g(r_2)$, and $g(r_3)$ are distinct and
  therefore without loss of generality equal to $\infty$, $0$ and
  $1$. So, as above, $g=g^\sigma$ for all $\sigma\in\Gal(E/K)$. Thus
  the coefficients of $g$ are fixed under $\Gal(E/K)$ and therefore
  contained in $K$.

  If $K$ is finite, we can argue as follows: We may assume that
  $g(\infty)=\infty$, so $g(z)=p(z)/q(z)$ for relatively prime
  polynomials $p,q\in E[z]$ with $\deg p>\deg q$. In addition, we may
  assume that $p$ and $q$ are monic. Let $\sigma$ be a generator of
  the cyclic group $\Gal(E/K)$. From $g^\sigma(z)\in E(g(z))$ and
  $g^\sigma(z)=\frac{p^\sigma(z)}{q^\sigma(z)}$ we obtain
  $g^\sigma=g+b$ for some $b\in E$. Repeated application of $\sigma$
  shows that $\text{Trace}_{E/K}b=0$. So by the additive Hilbert's
  Theorem 90 there is $c\in E$ with $c-c^\sigma=b$, hence
  $(g+c)^\sigma=g+c$ and therefore $g+c\in K(z)$. (The same argument,
  except that Hilbert's Theorem 90 is a trivial fact for the extension
  $\CC/\RR$, works as an alternative proof of the lemma too.)
\end{Remark}

Theorem \ref{T:b} is a direct consequence of Theorem \ref{T:RC}. For
if $g$ is weakly injective, and $g\circ f$ is injective on
$g(\hat\RR)$, then $g\circ f\circ g$ is weakly injective, so $f\circ
g$ is weakly injective even more.

Thus we only need to prove Theorem \ref{T:RC}.

Let $t$ be a variable over $\CC$, and $Z$ be another variable over the
field $\CC(t)$ of rational functions in $t$.

If $h(\infty)\ne\infty$, then upon replacing $h(Z)$ with
$\frac{1}{h(Z)-h(\infty)}$ (and $\tau$ with
$\frac{1}{\tau-h(\infty)}$) we may assume that $h(\infty)=\infty$. By
further replacing $h(Z)$ with $h(Z)-\tau$, we may and do assume that
$h(Z)=\frac{p(Z)}{q(Z)}$, where $p(Z),q(Z)\in\RR[Z]$ are relatively
prime polynomials, $\deg p(Z)=n>\deg q(Z)$, and
$p(Z)=\prod_{i=1}^n(Z-\alpha_i)$, where the $\alpha_i$ are pairwise
distinct, $\alpha_1\in\RR$, and $\alpha_i\notin\CC\setminus\RR$ for
$i\ge2$.

By Hensel's Lemma, $p(Z)-tq(Z)=\prod_{i=1}^n(Z-z_i)$, where
$z_i\in\CC[[t]]$ has constant term $\alpha_i$. As $\alpha_1\in\RR$ and
$p,q\in\RR[Z]$, we actually have $z_1\in\RR[[t]]$. Write $z=z_1$. The
complex conjugation acts on the coefficients of the formal Laurent
series $\CC((t))$ and fixes $t$. Under this action, $z$ is fixed, and
the $z_i'$'s for $i\ge2$ are flipped in pairs. Note that $t=h(z_i)$
for all $i$.

$\CC(z_1,z_2,\dots,z_n)$ is a Galois extension of $\RR(t)$, and
$\CC(z_1,z_2,\dots,z_n)\subseteq\CC((t))$. So the restriction of the
complex conjugation action of $\CC((t))$ to $\CC(z_1,z_2,\dots,z_n)$
is an involution $\sigma\in A:=\Gal(\CC(z_1,z_2,\dots,z_n)/\RR(t))$
which fixes $z=z_1$, and moves all $z_i$ with $i>1$.

Now write $h(z)=f(g(z))$ as in Theorem \ref{T:RC}. Then also
$t=h(z)=f(g(z))=\bar f(\bar g(z))$. This yields the following
inclusion of fields and the corresponding subgroups of $A$ by Galois
correspondence:
\begin{equation*}
\begin{tikzcd}[every arrow/.append style={-}]
{}  & \CC(z_1,\dots,z_n) \arrow{d} &  &  & 1 \arrow{d} &  \\
  & \CC(z) \arrow{ld} \arrow{rd} &  &  & G_z
  \arrow{ld} \arrow{rd} & \\
  \CC(g(z)) \arrow{rd} & & \CC(\bar g(z)) \arrow{ld} & M
  \arrow{rd} & &
  M^\sigma \arrow{ld}\\
  & \CC(t) \arrow{d} &  &  & G \arrow{d} &  \\
  & \RR(t) & & & A &
\end{tikzcd}
\end{equation*}
Here $G_z$ is the stabilizer of $z=z_1$ in $G$. As $M$ is the
stabilizer of $g(z)$ in $G$, and $\sigma$ maps $g(z)$ to $\bar g(z)$,
the stabilizer of $\bar g(z)$ is $\sigma^{-1}M\sigma=M^\sigma$.

By construction, $\CC(z_1,z_2,\dots,z_n)$ is the splitting field of
$p(Z)-tq(Z)$ over $\CC(t)$, hence $G$ acts faithfully on
$\set{z=z_1,z_2,\dots,z_n}$. Now $G$ is normal in $A$, so $\sigma\in
A$ normalizes $G$. Furthermore, $\sigma$ fixes exactly one of the
$z_i$. So $M=M^\sigma$ by Proposition \ref{P:Gs}. Thus
$\CC(g(z))=\CC(\bar g(z))$ by the Galois correspondence, and finally
$\lambda(g(z))\in\RR(z)$ for some linear fractional $\lambda\in\CC(z)$
by Lemma \ref{L:cocycle}. This proves Theorem \ref{T:RC}.
\section{Some more examples}\label{S:RC}
If $h=f\circ g$ for polynomials $f,g\in\CC[z]$, and $h\in\RR[z]$, then
it is well known that there is a linear polynomial $\lambda\in\CC[z]$
such that $\lambda\circ g\in\RR[z]$. See \cite[Theorem
3.5]{FriedMacRae}, or \cite[Prop.~2.2]{Turnwald:Schur} for a down to
earth proof. A less elementary but more conceptual proof can be based
on the fact that the Galois group of $h(Z)-t$ over $\CC(t)$ contains
an element which cyclically permutes the roots of $h(Z)-t$, and the
other fact that subgroups of cyclic groups are uniquely determined by
their orders.

Note that if $h=f\circ g$ for a polynomial $h$ and rational functions
$f,g$, then there is a linear fractional function $\rho\in\CC(z)$ such
that $h=(f\circ\rho^{-1})\circ(\rho\circ g)$, and $f\circ\rho^{-1}$
and $\rho\circ g$ are polynomials. (This follows from looking at the
fiber $h^{-1}(\infty)$.)

So in order to get examples of rational functions $h\in\RR(z)$ which
decompose as $h=f\circ g$ with $f,g\in\CC(z)$ such that there is no
linear fractional function $\lambda\in\CC(z)$ with $\lambda\circ
g\in\RR(z)$, one has to assume that $h$ is not a polynomial.

One also has to assume that $g$ is not a polynomial, as the following
easy result shows.
\begin{Lemma}
  Suppose that $f\circ g\in\RR(z)$ where $f\in\CC(z)$ and $g\in\CC[z]$
  are not constant. Then $\lambda\circ g\in\RR[z]$ for a linear
  polynomial $\lambda\in\CC[z]$.
\end{Lemma}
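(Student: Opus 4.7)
The plan is to perform a direct Laurent expansion of $h=f\circ g$ at $z=\infty$ and extract the required reality conditions on the coefficients of $g$. Write $g(z)=a_nz^n+\cdots+a_0$ with $a_n\ne 0$ and set $b_i:=a_{n-i}/a_n$ for $1\le i\le n$. I aim to show $b_1,\dots,b_{n-1}\in\RR$; once this is known, taking $\lambda(w):=a_n^{-1}w-i\,\Im(a_0/a_n)$ yields a linear polynomial in $\CC[w]$ with $\lambda\circ g\in\RR[z]$, since each non-constant coefficient becomes real by the choice of leading scalar and the constant term equals $\Re(a_0/a_n)$.

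To derive the reality of the $b_i$, let $c:=h(\infty)=f(\infty)\in\hat\RR$ and expand $f$ at $w=\infty$: either $f(w)-c=f_ew^{-e}+f_{e+1}w^{-e-1}+\cdots$ with $e\ge 1$ and $f_e\ne 0$ (when $c$ is finite), or $f(w)=f_{e_f}w^{e_f}+f_{e_f-1}w^{e_f-1}+\cdots$ with $e_f\ge 1$ and $f_{e_f}\ne 0$ (when $c=\infty$). Writing $g(z)=a_nz^n(1+B(u))$ with $u=z^{-1}$ and $B(u)=\sum_{i=1}^n b_iu^i$, expand $h-c$ (respectively $h$) as a Laurent series in $u$. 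The key observation is that each successive term of $f$ shifts the $z$-exponent by a full $n$, so for every $0\le i<n$ only the leading term $f_{\pm e}\,g^{\pm e}$ contributes to the coefficient of the $(i+1)$-th highest power of $z$ in $h$. This coefficient equals $f_{\pm e}\,a_n^{\pm e}\cdot c_i$, where $c_i$ is the coefficient of $u^i$ in $(1+B(u))^{\pm e}$.

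A binomial expansion shows $c_i=\epsilon\cdot b_i+Q_i(b_1,\dots,b_{i-1})$ for a nonzero integer $\epsilon\in\{-e,e_f\}$ and a universal polynomial $Q_i$: the linear piece $\epsilon\,B(u)$ contributes $\epsilon\,b_i$, while each $B(u)^j$ with $j\ge 2$ involves only $b_1,\dots,b_{i-j+1}$ in its $u^i$-part. Since $h\in\RR(z)$, all Laurent coefficients of $h$ at $\infty$ are real; in particular $f_{\pm e}\,a_n^{\pm e}\in\RR^\times$ (from the $i=0$ coefficient). Dividing out gives $c_i\in\RR$ for $1\le i<n$, and a straightforward induction on $i$ yields $b_i\in\RR$ one after another. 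The only real obstacle is the bookkeeping of the Laurent and binomial expansions, but the isolation argument above makes this clean; in particular neither Proposition~\ref{P:Gs} nor the Galois-theoretic machinery used for Theorem~\ref{T:RC} is needed here.
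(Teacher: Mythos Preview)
Your argument is correct. The Laurent expansion at $\infty$ isolates the contribution of the leading term of $f$ for the first $n$ coefficients, and the recursive extraction of $b_1,\dots,b_{n-1}$ via the binomial expansion goes through cleanly in both the $c\in\RR$ and $c=\infty$ cases; the adjustment of the constant term by subtracting $i\,\Im(a_0/a_n)$ then finishes the job.

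The paper proceeds quite differently and more briefly: after normalizing $g$ to be monic and writing $f=p/q$ with $p,q\in\CC[z]$ coprime and $p$ monic, it observes that both $p(g(z))/q(g(z))$ and $\bar p(\bar g(z))/\bar q(\bar g(z))$ are reduced fractions with monic numerators representing the same real rational function, so $p\circ g=\bar p\circ\bar g\in\RR[z]$; the conclusion then follows from the already-known \emph{polynomial} case (cited as \cite{FriedMacRae}, \cite{Turnwald:Schur}). In effect the paper reduces the lemma to the polynomial decomposition result, whereas your Laurent computation is self-contained and in fact reproves that polynomial result along the way (your $c=\infty$ case with $f$ a polynomial is exactly that situation). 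The trade-off is brevity and conceptual cleanliness versus independence from external references; both are perfectly valid here, and your approach has the mild advantage of not needing to quote the polynomial case separately.
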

\begin{proof}
  Assume without loss of generality that $g$ is monic. Write
  $f=\frac{p}{q}$ with $p,q\in\CC[z]$ relatively prime and $p$
  monic. From $f\circ g\in\RR(z)$ we obtain
\[
\frac{\bar p(\bar g(z))}{\bar q(\bar g(z))}=\frac{p(g(z))}{q(g(z))}.
\]
Clearly, both fractions are reduced, and the numerators of both sides
are monic. Therefore $\bar p(\bar g(z))=p(g(z))$, hence $p\circ
g\in\RR[z]$, and the claim follows from the polynomial case.
\end{proof}

Now we give some examples of rational functions $h\in\RR(z)$ with a
decomposition $h=f\circ g$ with $f,g\in\CC(z)$ which is not equivalent
to a decomposition over $\RR$. Recall that this is equivalent to
$g(\hat\RR)$ not being a circle. Of course, Theorem \ref{T:ell} gives
many example for this. But these examples are quite complicated and
not explicit. However, if one drops the requirement that $g(\hat\RR)$
is a Jordan curve, then there are quite simple examples. We give two
series.
\begin{Example}[Attributed to Pakovich by Eremenko in
  \cite{eremenko:invariant_curves}]
  Let $T_n\in\RR[z]$ be the polynomial with
  $T_n(z+\frac{1}{z})=z^n+\frac{1}{z^n}$ (so $T_n$ is essentially a
  Chebychev polynomial.) Set $g(z)=\zeta z+\frac{1}{\zeta z}$ for an
  $n$-th root of unity $\zeta$. Then
  $T_n(g(z))=z^n+\frac{1}{z^n}\in\RR(z)$, while $g(\hat\RR)$ is not a
  circle if $\zeta^4\ne1$.
\end{Example}
\begin{Example} Pick $\zeta\in\CC$ with $\abs{\zeta}=1$, and set
\[
F=z^k(1-z)^{n-k},\;G=\frac{1-\zeta z^k}{1-\zeta z^n},\;\mu(z)=\frac{z+i}{z-i}
\]
for $1\le k<n$. A straightforward calculation shows that
\begin{equation}\label{e:FG}
F(\bar G(\frac{1}{z})=\frac{1}{\zeta^{n-k}}F(G(z)).
\end{equation}
Pick $\rho\in\CC$ with $\rho^2=\frac{1}{\zeta^{n-k}}$, and set
\[
f(z)=\rho F(z),\;g(z)=G(\mu(z)).
\]
From $\bar\mu(z)=\mu(\frac{1}{z})$ and \eqref{e:FG} we get
$\overline{f\circ g}=f\circ g$, hence $f\circ g\in\RR(z)$.

On the other hand, it is easy to see that, except for a some
degenerate cases, $g(\hat\RR)$ is not a circle. Furthermore, we see
that $g(\hat\RR)$ isn't even a Jordan curve (unless it is a circle),
for if $z$ runs through $\hat\RR$, $\mu(z)$ runs through the unit
circle, so the numerator and denominator of $g(z)=G(\mu(z))$ vanish
$k$ and $n$ times, respectively, so $g(\hat\RR)$ has several self
intersections.

Originally I had only found the cases $\zeta=-1$, $k=n-1$. Mike Zieve
observed the strong similarity of these examples with functions which
turned up in work of Avanzi and Zannier. In
\cite{avanzi_zannier:fx=fy} they classify triples $F\in\CC[z]$,
$G_1,G_2\in\CC(z)$ such that $F\circ G_1=F\circ G_2$. One of their
cases (\cite[Prop.~4.7(3)]{avanzi_zannier:fx=fy}) is the above series
with $\zeta=1$, and the series
\cite[Prop.~5.6(4)]{avanzi_zannier:fx=fy} is essentially our series
from above.

The connection with the work by Avanzi and Zannier is not a surprise:
If we look for polynomials $F\in\RR[z]$ such that there is
$G\in\CC(z)\setminus\RR(z)$ with $F\circ G\in\RR(z)$, then $F\circ\bar
G=F\circ G$ with $G\ne\bar G$. Furthermore, note that if $\zeta$ is an
$m$-th root of unity, then $f(z)^{2m}\in\RR(z)$. So upon setting
$\tilde f=f^{2m}=F^{2m}$, we have $\tilde f\circ g=\tilde f\circ\bar
g$.
\end{Example}

\noindent{\scshape Institut f\"ur Mathematik, Universit\"at W\"urzburg,
  97074 W\"urzburg, Germany}\par
\noindent{\slshape E-mail: }{\ttfamily peter.mueller@mathematik.uni-wuerzburg.de}
\end{document}